\def\equalinlaw{\mathop{=_{\mathcal{D}}}\nolimits}			
\def\age{\mathop{\text{age}}\nolimits}					
\def\Nb{\mathop{\mathbb{N}_{}}\nolimits}					
\def\ar{\mathop{\mathrm{ar}}\nolimits}	
\def\tp{\mathop{\mathrm{tp}}\nolimits}	
\def\odap{\mathord{<}\omega}
\newtheorem{theorem}{Theorem}[section]
\newtheorem{lemma}[theorem]{Lemma}
\newtheorem{cor}[theorem]{Corollary}
\theoremstyle{definition}
\newtheorem{definition}[theorem]{Definition}
\newtheorem{example}[theorem]{Example}
\def \rng{\operatorname{rng}}
\def\dotminussym#1#2{%
  \setbox0=\hbox{$\m@th#1-$}%
  \kern.5\wd0%
  \hbox to 0pt{\hss\hbox{$\m@th#1-$}\hss}%
  \raise.6\ht0\hbox to 0pt{\hss$\m@th#1.$\hss}%
  \kern.5\wd0}
\mathchardef\mhyphen="2D
\begin{document}

\title{Relative exchangeability with equivalence relations}
\author{Harry Crane and Henry Towsner}
\address {Department of Statistics \& Biostatistics, Rutgers University, 110 Frelinghuysen Avenue, Piscataway, NJ 08854, USA}
\email{hcrane@stat.rutgers.edu}
\urladdr{\url{http://stat.rutgers.edu/home/hcrane}}
\address {Department of Mathematics, University of Pennsylvania, 209 South 33rd Street, Philadelphia, PA 19104-6395, USA}
\email{htowsner@math.upenn.edu}
\urladdr{\url{http://www.math.upenn.edu/~htowsner}}
\thanks{H.\ Crane is partially supported by NSF grant DMS-1308899.}

\subjclass{03C07 (Basic properties of first-order languages and structures); 03C98 (Applications of model theory); 60G09 (exchangeability)}
\keywords{exchangeability; Aldous--Hoover theorem; relational structure; Fra\"{i}sse limit; amalgamation}

\date{\today}

\begin{abstract}
We describe an Aldous--Hoover-type characterization of random relational structures that are exchangeable relative to a fixed structure which may have various equivalence relations.  Our main theorem gives the common generalization of the results on relative exchangeability due to Ackerman \cite{Ackerman2015} and Crane and Towsner \cite{CraneTowsner2015} and hierarchical exchangeability results due to Austin and Panchenko \cite{AustinPanchenko2014}.
\end{abstract}

\maketitle

\section{Introduction}\label{section:introduction}

The Aldous--Hoover theorem \cite{Aldous1981,Hoover1979} gives a canonical form for random countable structures derived from exchangeable processes.  
Subsequent work on relatively exchangeable structures \cite{Ackerman2015,CraneTowsner2015} and hierarchically exchangeable arrays \cite{AustinPanchenko2014} refine the Aldous--Hoover theory in several directions.
Here we consider a further extension, not covered by the main theorems of \cite{Ackerman2015,AustinPanchenko2014,CraneTowsner2015}, to the case where symmetries are determined by a structure with definable equivalence relations.
We simplify the exposition specializing to symmetric structures until Section \ref{section:asymmetric}, at which point we discuss our handling of asymmetries and then state our general representation theorem.

We first recall some definitions.  By a (finite, relational) \emph{signature} we mean a finite set of symbols $\mathcal{L}=\{R_1,\ldots,R_r\}$ and, for each $j\leq r$, a positive integer $\ar(R_j)$, called the \emph{arity} of $R_j$.  An {\em $\mathcal{L}$-structure on $M$} is a collection $\mathfrak{M}=(M,\mathcal{R}_1,\ldots,\mathcal{R}_r)$, where $M$ is a set and $\mathcal{R}_j\subseteq M^{\ar(R_j)}$ for each $j\in[1,r]:=\{1,\ldots,r\}$.  When discussing more than one $\mathcal{L}$-structure simultaneously, for example, $\mathfrak{M}$ and $\mathfrak{N}$, we write $R_j^{\mathfrak{M}}$, respectively $R_j^{\mathfrak{N}}$, to denote the interpretation of $R_j$ in $\mathfrak{M}$, respectively $\mathfrak{N}$. We also sometimes call $M$ the {\em universe} of $\mathfrak{M}$ and write $|\mathfrak{M}|=M$.  We write $\mathcal{L}_M$ for the collection of $\mathcal{L}$-structures on $M$.

For convenience, we often abstract away from the particular relations and instead consider a single object which records, for a given finite set $s$, exactly which relations hold of tuples from $s$.  

\begin{definition}
Given an $\mathcal{L}$-structure $\mathfrak{M}=(\Nb,\mathcal{R}_1,\ldots,\mathcal{R}_r)$ and $s\subseteq|\mathfrak{M}|$, we define the {\em quantifier-free type of $s$ in $\mathfrak{M}$} by
\begin{eqnarray*}
\lefteqn{\tp_{\mathfrak{M}}(s)=}\\
&&\{R_j x_1\cdots x_{\ar(R_j)}\mid j\leq r, \{x_1,\ldots,x_{\ar(R_j)}\}\subseteq s\text{ and }\langle x_{1},\ldots,x_{{\ar(R_j)}}\rangle\in \mathcal{R}_j\}.
\end{eqnarray*}
\end{definition}
Our definition of a quantifier-free type is not quite the standard one from model theory, but is slightly simpler and equivalent in our context.  Any structure $\mathfrak{M}$ is determined by the collection of all its quantifier-free types $\{\tp_{\mathfrak{M}}(s)\}_{s\subseteq|\mathfrak{M}|}$.

Every injection $\phi:M'\rightarrow M$ maps $\mathcal{L}_M$ to $\mathcal{L}_{M'}$ in the natural way: $\mathfrak{M}\mapsto\mathfrak{M}^{\phi}:=(M',\mathcal{R}_1^{\phi},\ldots,\mathcal{R}_r^{\phi})$ with 
\[R_jx_1\cdots x_{\ar(R_j)}\in\tp_{\mathfrak{M}^{\phi}}(s)\quad\Longleftrightarrow\quad R_j\phi(x_1)\cdots\phi(x_{\ar(R_j)})\in\tp_{\mathfrak{M}}(\phi(s)),\]
where $\phi(s)$ is the image of $s$ by $\phi$.
We call $\phi$ an \emph{embedding} of $\mathfrak{M}^{\phi}$ into $\mathfrak{M}$, written $\phi:\mathfrak{M}^{\phi}\rightarrow\mathfrak{M}$.  
When $M'\subset M$, the inclusion map, $s\mapsto s$, determines the {\em restriction} $\mathfrak{M}|_{M'}$ of $\mathfrak{M}$ to an $\mathcal{L}$-structure over $M'$.

If $\mu$ is a probability measure on $\mathcal{L}_M$, we write $\mathfrak{X}\sim\mu$ to denote that $\mathfrak{X}$ is a random structure chosen according to $\mu$ and in this case we call $\mathfrak{X}$ a {\em random $\mathcal{L}$-structure on $M$}. 
Random $\mathcal{L}$-structures $\mathfrak{X}$ and $\mathfrak{Y}$ are \emph{equal in distribution}, written $\mathfrak{X}\equalinlaw\mathfrak{Y}$, if $\mathbb{P}(\mathfrak{X}|_S=\mathfrak{S})=\mathbb{P}(\mathfrak{Y}|_S=\mathfrak{S})$  for every $\mathfrak{S}\in\mathcal{L}_S$, for all finite $S\subseteq M$.
A random structure $\mathfrak{X}$ with universe $S$ is \emph{exchangeable} if $\mathfrak{X}=_{\mathcal{D}}\mathfrak{X}^\sigma$ for all permutations $\sigma:S\to S$.

\begin{theorem}[Aldous--Hoover \cite{Aldous1981,Hoover1979}]
Let $\mathfrak{X}$ be an exchangeable random structure with universe $\mathbb{N}$ such that all relations are symmetric with probability $1$.  Then there is a family of Borel measurable functions $\{f_n\}$ such that
 the structure $\mathfrak{Y}$ with quantifier-free types given by
\[\tp_{\mathfrak{Y}}(s)=f_{|s|}((\xi_t)_{t\subseteq s}),\quad s\subseteq\Nb,\]
is equal in distribution to $\mathfrak{X}$, where $\{\xi_t\}_{t\subseteq\Nb}$ are independent and identically distributed (i.i.d.)~Uniform$[0,1]$ random variables.
\end{theorem}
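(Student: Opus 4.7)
The plan is to prove this by induction on the maximum arity $n$ of relations in $\mathcal{L}$, following the classical ergodic-theoretic approach of Aldous and Hoover. The base case ($n=1$) reduces to de Finetti's theorem, since the entire structure is then encoded in the exchangeable sequence $(\tp_\mathfrak{X}(\{i\}))_{i\in\mathbb{N}}$ of quantifier-free $1$-types valued in a finite set. De Finetti supplies a random probability measure $\nu$ on that set such that, conditional on $\nu$, the $1$-types are i.i.d.~from $\nu$; Borel isomorphisms between $[0,1]$ and the relevant Polish spaces let us write $\nu=f_0(\xi_\emptyset)$ and $\tp_\mathfrak{Y}(\{i\})=f_1(\xi_\emptyset,\xi_{\{i\}})$.

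For the inductive step, first perform the ergodic decomposition of the law of $\mathfrak{X}$ under the action of $\symmetricN$ on $\mathcal{L}_{\mathbb{N}}$, encoding the mixing parameter via $\xi_\emptyset$. Conditional on $\xi_\emptyset$ we may therefore assume $\mathfrak{X}$ is \emph{extreme} exchangeable, hence \emph{dissociated}: $\tp_\mathfrak{X}(s)$ and $\tp_\mathfrak{X}(s')$ are independent whenever finite $s,s'\subseteq\mathbb{N}$ are disjoint. Now construct $\xi_s$ recursively in order of increasing cardinality: given $(\xi_t)_{t\subsetneq s}$, take a regular conditional distribution of $\tp_\mathfrak{X}(s)$ given this data and apply a measurable inverse-CDF construction to realize $\tp_\mathfrak{X}(s)=g_s\bigl(\xi_s,(\xi_t)_{t\subsetneq s}\bigr)$ for a fresh $U[0,1]$ variable $\xi_s$. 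Exchangeability, together with the assumption that all relations are symmetric, forces $g_s$ to depend only on $|s|$ and to be symmetric in its lower-level arguments, so we set $f_{|s|}:=g_s$.

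The main obstacle is showing that the full collection $\{\xi_s\}$ is jointly i.i.d., not merely recursively conditionally independent. For disjoint $s,s'$ this follows directly from dissociation, as the subfamilies $(\xi_t)_{t\subseteq s}$ and $(\xi_{t'})_{t'\subseteq s'}$ are built from independent pieces of $\mathfrak{X}$. For overlapping $s,s'$ with neither contained in the other, a finer argument is needed: by a secondary induction on $|s\cup s'|$, combining dissociation applied to the substructure on $s\cup s'$ with the tower property of conditional expectation, one shows that the residual of $\tp_\mathfrak{X}(s)$ past its proper-subset information is orthogonal to the analogous residual at $s'$. The underlying reason is that, in an extreme exchangeable measure, each ``level'' of the construction introduces genuinely new randomness, independent of everything at strictly lower levels or at disjoint coordinates.

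Once joint independence of the $\xi_s$ has been verified, one checks that $\tp_\mathfrak{Y}(s)=f_{|s|}((\xi_t)_{t\subseteq s})$ reproduces the finite-dimensional marginals of $\mathfrak{X}$ on every finite $S\subseteq\mathbb{N}$, which is immediate from the construction; this suffices to conclude $\mathfrak{Y}\equalinlaw\mathfrak{X}$.
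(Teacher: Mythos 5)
The paper does not prove this statement; it is quoted as background with a citation to Aldous and Hoover, so there is no in-paper argument to compare against. Judged on its own terms, your sketch correctly reproduces the standard skeleton of the classical proof: de Finetti for the base case, ergodic decomposition to reduce to an extreme (hence dissociated) law encoded by $\xi_\emptyset$, and a recursive ``noise extraction'' writing $\tp_{\mathfrak{X}}(s)=g_s\bigl(\xi_s,(\xi_t)_{t\subsetneq s}\bigr)$ by transfer of regular conditional distributions. But the paragraph you flag as ``the main obstacle'' is not a side issue to be handled by ``a secondary induction\ldots with the tower property'' --- it \emph{is} the theorem. Dissociation gives independence only across disjoint index sets; for overlapping $s,s'$ the required statement is a conditional independence of $\tp_{\mathfrak{X}}(s)$ and $\tp_{\mathfrak{X}}(s')$ over the $\sigma$-algebra generated by the strictly lower levels, and this is precisely the hard ``coding lemma'' of Aldous--Hoover (in Kallenberg's treatment it rests on a delicate reverse-martingale/spreadability argument; naive versions of the claim, with the wrong conditioning $\sigma$-algebras, are false). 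Your justification --- that in an extreme measure ``each level introduces genuinely new randomness, independent of everything at strictly lower levels or at disjoint coordinates'' --- is a restatement of the conclusion, not an argument, so the proof has a genuine gap at its center.

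A secondary gap: the assertion that exchangeability ``forces $g_s$ to depend only on $|s|$'' does not follow automatically from the recursive construction. Regular conditional distributions and inverse-CDF maps are only defined up to null sets and involve arbitrary measurable choices made separately for each $s$; one must make these choices equivariantly (e.g., construct $g$ once on a reference set of each cardinality and transport it by exchangeability, checking that the transported noises remain uniform and independent) before $f_{|s|}:=g_s$ is legitimate. Neither gap is a wrong turn --- both are repairable and are repaired in the cited literature --- but as written the proposal omits the two steps that distinguish Aldous--Hoover from an iterated de Finetti argument.
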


In \cite{CraneTowsner2015}, we introduced the notion of \emph{relatively exchangeable structures}, whose distributions need only be invariant with respect to certain partial automorphisms of a reference structure $\mathfrak{M}$.
\begin{definition}\label{defn:M-exch}
Let $\mathcal{L},\mathcal{L}'$ be signatures and $\mathfrak{M}\in\mathcal{L}_{\mathbb{N}}$.
A random $\mathcal{L}'$-structure $\mathfrak{X}$ with universe $\mathbb{N}$ is \emph{$\mathfrak{M}$-exchangeable}, or {\em relatively exchangeable with respect to $\mathfrak{M}$}, if, for every injection $\phi:S\rightarrow T$ with  $S,T\subseteq\mathbb{N}$ finite, 
  $\mathfrak{M}|_T^\phi=\mathfrak{M}|_S$ implies $\mathfrak{X}|_T^\phi=_{\mathcal{D}}\mathfrak{X}|_S$.
  \end{definition}

Ackerman \cite{Ackerman2015} independently studied a related notion of $\text{Aut}(\mathfrak{M})$-invariant structures, whose distributions are invariant with respect to permutations in the automorphism group of $\mathfrak{M}$.
Though ostensibly a less restrictive condition, $\text{Aut}(\mathfrak{M})$-invariance is identical to $\mathfrak{M}$-exchangeability in the setting of both \cite{Ackerman2015} and \cite{CraneTowsner2015}.
For example, the strongest representations in \cite{Ackerman2015,CraneTowsner2015} hold under the assumptions that $\mathfrak{M}$ is ultrahomogeneous and has $\odap$-DAP, both of which we define below.

A notable departure from $\odap$-DAP occurs when $\mathfrak{M}$ has definable equivalence relations.  
In the absence of $\odap$-DAP, we \cite[Theorem 3.15]{CraneTowsner2015} have proven a more general, albeit weaker, representation in the absence of $\odap$-DAP \cite[Theorem 3.15]{CraneTowsner2015}, and Austin and Panchenko \cite{AustinPanchenko2014} characterized certain hierarchically exchangeable structures, which are defined by an invariance with respect to permutations that preserve certain tree structures.
Although Austin and Panchenko's notion is not defined directly in terms of relative exchangeability, it easily translates as exchangeability relative to some $\mathfrak{M}$ with definable equivalence relations, as we discuss in Section \ref{sec:AP}.  Some definitions and notation are needed before we can state our representation theorem, which appears as Corollary \ref{thm:main_cor}.


\section{Setting and Examples}\label{sec:setting}

\subsection{Fra\"iss\'e Classes}

\begin{definition}
An $\mathcal{L}$-structure  $\mathfrak{M}$ is \emph{ultrahomogeneous} if whenever $S,T\subseteq|\mathfrak{M}|$ are finite and $\pi:\mathfrak{M}|_S\rightarrow\mathfrak{M}|_T$ is an isomorphism, there is an automorphism $\hat\pi$ of $\mathfrak{M}$ such that $\hat\pi\upharpoonright S=\pi$, where $\hat{\pi}\upharpoonright S$ denotes the domain restriction of $\hat{\pi}$ to $S$.
\end{definition}

Throughout the paper, we are interested in the setting of two signatures $\mathcal{L}$ and $\mathcal{L}'$ with $\mathcal{L}$ finite, a fixed ultrahomogeneous $\mathcal{L}$-structure $\mathfrak{M}$ with universe $\mathbb{N}$, and a random $\mathcal{L}'$-structure $\mathfrak{X}$ whose distribution is relatively exchangeable with respect to $\mathfrak{M}$.
It is often more natural to replace $\mathfrak{M}$ with the set of its finite embedded substructures, called the {\em age of $\mathfrak{M}$} and denoted $\age(\mathfrak{M})$.  To this end we recall some basic facts about Fra\"iss\'e classes and introduce the idea of a random process over a Fra\"iss\'e class.

\begin{definition}
The {\em age of $\mathfrak{M}$}, denoted $\age(\mathfrak{M})$, is the set of finite structures $\mathfrak{S}$ such that there is an embedding of $\mathfrak{S}$ into $\mathfrak{M}$.

  A collection $\mathcal{K}$ of finite structures has the \emph{amalgamation property} if whenever $\mathfrak{S},\mathfrak{T}_0,\mathfrak{T}_1\in\mathcal{K}$ and $f_i:\mathfrak{S}\rightarrow\mathfrak{T}_i$ are embeddings, there is a $\mathfrak{U}\in\mathcal{K}$ and embeddings $g_i:\mathfrak{T}_i\rightarrow\mathfrak{U}$ with $g_0\circ f_0=g_1\circ f_1$.
  
  A collection $\mathcal{K}$ of finite structures is a \emph{Fra\"iss\'e class} if it has the amalgamation property and is closed under isomorphism and taking substructures, that is, $\mathfrak{S}\in\mathcal{K}$ implies $\mathfrak{S}|_S\in\mathcal{K}$ for all $S\subseteq|\mathfrak{S}|$.
\end{definition}

In a finite relational signature, Fra\"iss\'e's theorem \cite[Theorem 6.1.2]{HodgesShorter} says that, on the one hand, if $\mathfrak{M}$ is a countable ultrahomogeneous structure then $\age(\mathfrak{M})$ is a Fra\"iss\'e class, and on the other hand, every Fra\"iss\'e class is the age of a unique (up to isomorphism) countable ultrahomogeneous structure, called the \textit{Fra\"iss\'e limit} of the class.
Thus, when $\mathfrak{M}$ is ultrahomogeneous, our notion of $\mathfrak{M}$-exchangeability from Definition \ref{defn:M-exch} and \cite[Definition 2.3]{CraneTowsner2015} is equivalent to $\age(\mathfrak{M})$-exchangeability in the sense of the following definition.


\begin{definition}\label{defn:rel exch}

  Let $\mathcal{K}$ be a Fra\"iss\'e class of $\mathcal{L}$-structures.  A \emph{$\mathcal{K}$-exchangeable $\mathcal{L}'$-structure} is a collection of random $\mathcal{L}'$-structures $\{\mathfrak{X}(\mathfrak{S})\}_{\mathfrak{S}\in\mathcal{K}}$ so that each $\mathfrak{X}(\mathfrak{S})$ is a random $\mathcal{L}'$-structure on $|\mathfrak{S}|$ and whenever $\pi:\mathfrak{S}\rightarrow\mathfrak{T}$, $\mathfrak{S},\mathfrak{T}\in\mathcal{K}$, is an embedding, $(\mathfrak{X}(\mathfrak{T}))^\pi=_{\mathcal{D}}\mathfrak{X}(\mathfrak{S})$.

  Let $\mathfrak{M}$ be ultrahomogeneous.  A random $\mathcal{L}'$-structure $\mathfrak{X}$ on $|\mathfrak{M}|$ is $\mathfrak{M}$-exchangeable if whenever $S,T\subseteq|\mathfrak{M}|$ are finite and $\pi:\mathfrak{M}|_S\rightarrow\mathfrak{M}|_T$ is an embedding, $\mathfrak{X}|^\phi_T=_{\mathcal{D}}\mathfrak{X}|_S$.
\end{definition}

The notion of $\mathcal{K}$-exchangeability, though equivalent to $\mathfrak{M}$-exchangeability in a certain sense, is often easy to work with.
We now show that these two notions are equivalent.

Let $\mathfrak{M}$ be a countable ultrahomogeneous structure with Fra\"iss\'e class $\mathcal{K}=\age(\mathfrak{M})$.  If $\mathfrak{S}\in\mathcal{K}$ and $\mathfrak{M}$ are both well-ordered---say, because $|\mathfrak{S}|=n$ and $|\mathfrak{M}|=\mathbb{N}$, or because we have otherwise chosen a well-ordering $<$ on $|\mathfrak{S}|$---then there is a canonical embedding $\rho_{\mathfrak{S},<}:\mathfrak{S}\rightarrow\mathfrak{M}$ given by inductively choosing $\rho_{\mathfrak{S},<}(i)=m_i$, $i\geq1$, so that 
$m_i$ is smallest positive integer larger than $m_{i-1}$ such that $\rho_{\mathfrak{S},<}\upharpoonright\{j: j\leq i\}$ is an embedding of $\mathfrak{S}|_{\{j: j\leq i\}}$ into $\mathfrak{M}$.  (Note that this definition \emph{does} depend on the choice of ordering $<$.)

Given an $\mathfrak{M}$-exchangeable structure $\mathfrak{X}$, we can define a $\mathcal{K}$-exchangeable structure $\{\mathfrak{X}(\mathfrak{S})\}_{\mathfrak{S}\in\mathcal{K}}$ by putting $\mathfrak{X}(\mathfrak{S})=\mathfrak{X}^{\rho_{\mathfrak{S},<}}$ for any ordering $<$ since $\mathfrak{M}$-exchangeability ensures that the distribution of $\mathfrak{X}(\mathfrak{S})$ is independent of the choice of ordering.
Conversely, given a $\mathcal{K}$-exchangeable structure $\{\mathfrak{X}(\mathfrak{S})\}_{\mathfrak{S}\in\mathcal{K}}$, we construct an $\mathfrak{M}$-exchangeable structure $\mathfrak{X}$ by randomly choosing $\mathfrak{X}|_{[n]}$, for each $n\geq1$, according to the conditional distribution of $\mathfrak{X}(\mathfrak{M}|_{[n]})$ given $\mathfrak{X}(\mathfrak{M}|_{[n-1]})$.  Here $\mathcal{K}$-exchangeability ensures that $\mathfrak{X}^{\pi}=_{\mathcal{D}}\mathfrak{X}$ for every automorphism $\pi$ of $\mathfrak{M}$.
For the remainder of the paper, we primarily consider $\mathcal{K}$-exchangeable structures, with the understanding that these are equivalent to $\mathfrak{M}$-exchangeable structures for the appropriate choice of $\mathfrak{M}$.

The na\"ive generalization of Aldous--Hoover to $\mathcal{K}$-exchangeable structures suggests an analogous representation with the additional input $\mathfrak{S}$, that is, a representation of the form
\begin{equation}
\tp_{\mathfrak{Y}}(s)=f_{|s|}(\mathfrak{S}|_{s},(\xi_t)_{t\subseteq s}),\quad s\subseteq\Nb,\label{eq:DAP_rep}
\end{equation}
for $(\xi_t)_{t\subseteq\Nb}$ i.i.d.\ Uniform$[0,1]$ random variables.
(Recall that we are restricting to the symmetric case for now.   The representation for structures with  possibly asymmetric relations is slightly more involved.)

  In \cite{CraneTowsner2015}, we noted that the representation in \eqref{eq:DAP_rep} is not always possible: even though the \emph{distribution} of $\mathfrak{X}(\mathfrak{S})$ does not depend on an ordering of $|\mathfrak{S}|$, it may be that any \emph{representation} does.
 The representation in \eqref{eq:DAP_rep} holds when $\mathcal{K}$ satisfies the additional restriction of $\odap$-DAP, as proven independently in \cite[Theorem 3.2]{CraneTowsner2015} and \cite{Ackerman2015}.

\begin{definition}
  Suppose that for each $i\leq n$, $\mathfrak{S}_i$ is an $\mathcal{L}$-structure with $|\mathfrak{S}_i|=[n]\setminus\{i\}$.  We say $(\mathfrak{S}_i)_{1\leq i\leq n}$ is an \emph{amalgamation plan of size $n$} if $\mathfrak{S}_i|_{[n]\setminus\{i,j\}}=\mathfrak{S}_j|_{[n]\setminus\{i,j\}}$ for all $1\leq i,j\leq n$.

  An \emph{amalgam} of such an amalgamation plan is a structure $\mathfrak{S}$ with $|\mathfrak{S}|=[n]$ and $\mathfrak{S}|_{[n]\setminus\{i\}}=\mathfrak{S}_i$ for all $i\in[n]$.

  We say a collection $\mathcal{K}$ of structures \emph{has the $n$-disjoint amalgamation property} ($n$-DAP) if, for every amalgamation plan $(\mathfrak{S}_i)_{1\leq i\leq n}$ of size $n$ with each $\mathfrak{S}_i\in \mathcal{K}$, there is an amalgam $\mathfrak{S}\in \mathcal{K}$.  We say $\mathcal{K}$ has {\em $\odap$-DAP} if $\mathcal{K}$ has $n$-DAP for all $n\geq1$.
\end{definition}

The main theorem in \cite{AFP} suggests a strong relationship between Aldous--Hoover-type representations and $2$-DAP, so our interest here is the case where $\mathcal{K}$ has $2$-DAP but not $\odap$-DAP.
 
\subsection{Equivalence Relations with Infinitely Many Classes}
One common obstacle to $n$-DAP is the presence of definable equivalence relations.  

\begin{example}\label{ex:one_eq}
Consider a signature $\mathcal{L}$ with a single binary relation $R$ and let $\mathcal{K}$ be the collection of all $\mathcal{L}$-structures $\mathfrak{S}=(S,R^{\mathfrak{S}})$, with $S\subset\Nb$ finite, such that $R^{\mathfrak{S}}$ is an equivalence relation.  Then $\mathcal{K}$ fails to have $3$-DAP.  For example, let $\mathfrak{S}_1,\mathfrak{S}_2,\mathfrak{S}_3\in\mathcal{K}$ be structures so that $(2,3)\in R^{\mathfrak{S}_1}$, $(1,3)\in R^{\mathfrak{S}_2}$, and $(1,2)\not\in R^{\mathfrak{S}_3}$.  These three structures constitute an amalgamation plan of size $3$, but they have no amalgam.

The absence of $\odap$-DAP obstructs the representation \eqref{eq:DAP_rep}.
For suppose $\mathcal{L}'$ is a signature with a single unary relation $P$, $\mathfrak{M}$ is an $\mathcal{L}$-structure, and, for every $\mathfrak{S}\in\mathcal{K}$, $\mathfrak{X}(\mathfrak{S})$ is obtained by independently including or excluding each equivalence class of $\mathfrak{S}$ in $P^{\mathfrak{X}(\mathfrak{S})}$ with probability $1/2$.
Although $\mathfrak{X}=\{\mathfrak{X}(\mathfrak{S})\}_{\mathfrak{S}\in\mathcal{K}}$ is $\mathcal{K}$-exchangeable, $\mathfrak{S}|_{\{x\}}$ is trivial for every $x\in\mathbb{N}$, preventing any way to coordinate elements in the same equivalence class in any representation of the form
\begin{equation}\label{eq:failed rep}x\in P^{\mathfrak{X}(\mathfrak{S})}\quad\Longleftrightarrow\quad Px\in f(\mathfrak{S}|_{\{x\}},\xi_\emptyset, \xi_x)\end{equation}
for some measurable function $f$ and i.i.d.\ Uniform$[0,1]$ random variables $\{\xi_{\emptyset}; (\xi_{x})_{x\in\mathbb{N}}\}$.
Indeed, the first argument of $f(\mathfrak{S}|_{\{x\}},\xi_{\emptyset},\xi_x)$ in \eqref{eq:failed rep} is moot, making the construction on the righthand side exchangeable even though $\mathfrak{X}(\mathfrak{S})$ need only be $\mathfrak{S}$-exchangeable.
\end{example}

We have previously \cite[Theorem 3.15]{CraneTowsner2015} proven a general representation for random structures exchangeable relative to any Fra\"iss\'e class $\mathcal{K}$, but in that representation $\tp_{\mathfrak{X}(\mathfrak{S})}(s)$ depends on the whole ordered substructure $\mathfrak{S}|_{[1,\max s]}$ rather than just $\mathfrak{S}|_{s}$.  When  definable equivalence relations are the only obstruction to $n$-DAP, we can give a more natural representation that accounts for the structure of these relations.

By its description, it is clear that the structure $\mathfrak{X}$ in Example \ref{ex:one_eq} can be represented in the form
\[x\in P^{\mathfrak{X}(\mathfrak{S})}\quad\Longleftrightarrow\quad Px\in f(\mathfrak{S}|_{\{x\}},\xi_\emptyset, \xi_x,\xi_{[x]_{\mathfrak{S}}}),\]
where $\xi_{[x]}$ is a Uniform$[0,1]$ random variable indexed by the \emph{equivalence class} $[x]_{\mathfrak{S}}$ of $x\in\mathbb{N}$ in $\mathfrak{S}$.
In fact, $f$ in this example can be chosen to depend only on $\xi_{[x]_{\mathfrak{S}}}$.

A further complication arises when we allow multiple equivalence relations.
\begin{example}\label{ex:two_eq}
  Consider a signature $\mathcal{L}$ with two binary relations $R, S$.  Let $\mathcal{K}_1$ be the collection of $\mathcal{L}$-structures for which both $R$ and $S$ are equivalence relations.  Let $\mathcal{K}_2\subseteq\mathcal{K}_1$ be those structures $\mathfrak{S}$ in which $(x,y)\in S^{\mathfrak{S}}$ implies $(x,y)\in R^{\mathfrak{S}}$.

For $\mathcal{K}_1$, we have to account for various combinations of equivalence classes from the two relations.  We expect to have dependencies not only on $y$, $[y]_{R^{\mathfrak{S}}}$, and $[y]_{S^{\mathfrak{S}}}$, but also on $[y]_{R^{\mathfrak{S}}}\cap [y]_{S^{\mathfrak{S}}}$.  For bookkeeping purposes, we write $E(y)$ for the set of equivalence classes of $y$ including under equality\footnote{Strictly speaking, $E(y)$ should be the set of equivalence classes of $y$ in the Fra\"iss\'e limit, or, equivalently, in sufficiently large structures in $\mathcal{K}_1$.  It is possible that a particular $\mathfrak{S}$ is ``too small'', in the sense that $[y]_{R^{\mathfrak{S}}}$ happens to equal $[y]_{S^{\mathfrak{S}}}$, even though the equivalence classes will differ in larger structures; we ignore this complication here and address it properly in our formal definition below.}, so that  $[y]_{=}=\{y\}$ and $E(y)=\{[y]_{=},[y]_{R^{\mathfrak{S}}},[y]_{S^{\mathfrak{S}}}\}$.   Our representation depends on the antichains from $E(y)$.  A \emph{blur} of $y$ (in $\mathcal{K}_1$) is any antichain from $E(y)$.  We write $B(y)$ to denote the set of blurs, so in this case $B(y)=\{\{[y]_{=}\},\{[y]_{R^{\mathfrak{S}}},[y]_{S^{\mathfrak{S}}}\},\{[y]_{R^{\mathfrak{S}}}\},\{[y]_{S^{\mathfrak{S}}}\}\}$.  We will show that the representation for $\mathcal{K}_1$-exchangeable structures $\mathfrak{Y}\equalinlaw\mathfrak{X}(\mathfrak{S})$ with unary relations has the form
\[\tp_{\mathfrak{Y}}(y)=f_{1}(\mathfrak{S}|_{y},(\xi_t)_{t\in B(y)}).\]

The class $\mathcal{K}_2$ contains structures with two nested equivalence relations.  Here it no longer makes sense to consider $[y]_{R^{\mathfrak{S}}}$ and $[y]_{S^{\mathfrak{S}}}$ separately, as reflected in the new definition of $B(y)=\{\{[y]_{=}\},\{[y]_{R^{\mathfrak{S}}}\},\{[y]_{S^{\mathfrak{S}}}\}\}$, which excludes $\{[y]_{R^{\mathfrak{S}}}, [y]_{S^{\mathfrak{S}}}\}$ because this is no longer an antichain of $E(y)$.  We will see that any symmetric $\mathcal{K}_2$-exchangeable structure has a representation of the form
\[\tp_{\mathfrak{Y}}(y)=f_{1}(\mathfrak{S}|_{y},(\xi_t)_{t\in B(y)})\]
with the adjusted definition of the blur.

\end{example}

For more general collections $\mathcal{K}$ and $\mathcal{K}$-exchangeable structures $\mathfrak{Y}$, we define $\tp_{\mathfrak{Y}}(s)=f_{|s|}(\cdots)$ with the random variables $\xi_t$ in the representation ranging over antichains from $\bigcup_{y\in s}E(y)$.

\subsection{Equivalence Relations with Finitely Many Classes}

All of the equivalence relations considered above have unboundedly many equivalence classes in $\mathcal{K}$ (or, equivalently, infinitely many equivalence classes in the Fra\"iss\'e limit).  We also wish to consider equivalence relations with a bounded number of classes.
These structures are easier to handle than those with infinitely many equivalence classes, even though they introduce a further obstacle to $\odap$-DAP.
Namely, in addition to avoiding amalgamation plans which contradict being an equivalence relation, such classes also avoid amalgamation plans which would amalgamate to have too many classes.

\begin{example}\label{ex:one_eq_two_classes}
  Consider a signature $\mathcal{L}$ with a single binary relation $R$ and the collection $\mathcal{K}$ of all $\mathcal{L}$-structures $\mathfrak{S}$ for which $R^{\mathfrak{S}}$ is an equivalence relation with at most two equivalence classes.

For example, with $\mathcal{K}$ as in Example \ref{ex:one_eq_two_classes}, a representation should have the form
\[\tp_{\mathfrak{X}(\mathfrak{S})}(s)=f_{|s|}(\mathfrak{S}|_{s},(\xi_t)_{t\subseteq s},\eta\upharpoonright\mathfrak{S}|_s),\quad s\subseteq\Nb,\]
where $\eta:\mathfrak{S}/R^{\mathfrak{S}}\rightarrow\{0,1\}$ is a randomly chosen function that labels the equivalence classes of $\mathfrak{S}$ uniformly without replacement from $\{0,1\}$ and $\eta\upharpoonright\mathfrak{S}|_s$ is the restriction of $\eta$ to a labeling of the equivalence classes of $\mathfrak{S}|_s$.  Furthermore, we should require that the distribution is symmetric in the choice of $\eta$ in the sense that if $\eta,\eta'$ are two different ways of labeling the same equivalence classes, then the expected value of $f$ over different values of $(\xi_t)_{t\subseteq s}$ is the same.

For example, suppose that $\mathcal{L}'$ has a single unary relation $P$ and $\mathfrak{X}(\mathfrak{S})$ is obtained by choosing one equivalence class of $\mathfrak{S}$ uniformly at random, putting the chosen class into $P^{\mathfrak{X}(\mathfrak{S})}$ and the unchosen class outside of $P^{\mathfrak{X}(\mathfrak{S})}$.  
Our representation should have the form
\[x\in P^{\mathfrak{X}(\mathfrak{S})}\quad\Longleftrightarrow\quad Px\in f(\mathfrak{S}|_{\{x\}},\xi_\emptyset,\xi_{\{x\}},\eta([x]_\sim)),\quad x\in|\mathfrak{S}|,\]
where $[x]_\sim$ denotes the equivalence class containing $x$ in $\mathfrak{S}$.
A tempting choice would be 
\[Px\in f(\mathfrak{S}|_{\{x\}},\xi_\emptyset,\xi_{\{x\}},\eta([x]_\sim))\quad\Longleftrightarrow\quad \eta([x]_\sim)=1,\]
allowing $\eta$ to determine which equivalence class is in $P^{\mathfrak{X}(\mathfrak{S}})$.
This choice, however, violates our symmetry criterion.  We prefer a representation where $\eta$ labels classes but $\xi_\emptyset$ determines which class belongs to $P$, for example,
\[Px\in f(\mathfrak{S}|_{\{x\}},\xi_{\emptyset},\xi_{\{x\}},\eta([x]_\sim))\quad\Longleftrightarrow\quad
\begin{array}{l}
\eta([x]_\sim)=1\text{ and }\xi_{\emptyset}\in[0,1/2]\text{ or}\\
\eta([x]_\sim)=0\text{ and }\xi_{\emptyset}\in(1/2,1].
\end{array}\]

\end{example}

\subsection{Asymmetric Structures}\label{section:asymmetric}

The above discussion specializes to the situation in which all relations in $\mathfrak{X}$ are assumed to be symmetric.  Allowing asymmetric relations introduces a new complication: for instance, when determining the type of $\{x,y\}$ in $\mathfrak{X}$, we may have to decide that exactly one of $(x,y)$ and $(y,x)$ belongs to some relation.  Various approaches appear in the literature \cite{AroskarCummings2014,MR2463439,KallenbergSymmetries}.

In \cite{CraneTowsner2015} we introduced a different approach which we believe best captures the philosophy of the Aldous--Hoover theorem.  When we need random data to represent the joint behavior of the pair $\{x,y\}$, we divide it into a symmetric part $\xi_{\{x,y\}}$ and an asymmetric part $\prec_{(x,y)}$, parallel to the way the Aldous--Hoover Theorem divides the behavior of the pair $\{x,y\}$ into $\xi_x,\xi_y$, and $\xi_{\{x,y\}}$.

\begin{definition}
  A \emph{uniform random ordering} of a finite set $s$ is an ordering $\prec_s$ of $s$ chosen uniformly at random.  
\end{definition}

Here we need to generalize the presence of random orderings to equivalence classes in the set of blurs $B(s)$, so our representations will include the data of a uniform random ordering $\prec_\tau$ for each $\tau\in B(s)$.

\subsection{Full Representation}

For our full result we consider classes $\mathcal{K}$ with multiple equivalence relations.  Inevitably, there is a trade-off between complexity and generality.  For now we quickly describe the collections of equivalence relations covered in our main theorem, delaying a careful definition to Section \ref{sec:sequences}.
\begin{itemize}
\item We assume that $\mathcal{K}$ is a Fra\"iss\'e class with $2$-DAP, $\sim_1,\ldots,\sim_d$ is a \emph{strongly orderly} sequence of equivalence relations in $\mathcal{K}$, and $\mathcal{K}$ has $\odap$-DAP \emph{up to $\sim_1,\ldots,\sim_d$}.  Strongly orderly (Definition \ref{defn:strongly orderly}) is a technical condition ruling out several pathological cases which would require an even more complicated representation, while $\odap$-DAP up to $\sim_1,\ldots,\sim_d$ (Definition \ref{defn:upto}) says that these equivalence relations constitute the only obstacles to $\odap$-DAP.
\item We write $\#(\sim_r)\in\mathbb{N}\cup\{\infty\}$ for the number of equivalence classes of $\sim_r$ (in a certain sense defined precisely below).
\item Each $\sim_r$ is an equivalence relation on $k_r$-tuples, and if $\#(\sim_r)=\infty$ then $k_r=1$.
\item For each $\mathfrak{S}\in\mathcal{K}$ and each $r$, we write $V_{r,\mathfrak{S}}$ for the domain on which $\sim_r$ is defined and let $U_{r,\mathfrak{S}}=V_{r,\mathfrak{S}}/\sim_r$ be the collection of equivalence classes of $\sim_r$.  
\item For any $y\in|\mathfrak{S}|$ with $\mathfrak{S}\in\mathcal{K}$, we write $E(y)$ for the collection of pairs $(y,\sim_r)$, where $\#(\sim_r)=\infty$, together with the pair $(y,=)$.  If $s\subseteq|\mathfrak{S}|$, we write $E(s)=\bigcup_{y\in s}E(y)/\simeq$, where $(y,\sim)\simeq(y',\sim)$ if and only if $y\sim y'$.  We define an ordering on $E(s)$ by $(y,\sim)\leq (y',\sim')$ if for some (equivalently, any) embedding $\gamma:\mathfrak{S}\rightarrow\mathfrak{M}$, $\mathfrak{M}$ is the Fra\"iss\'e limit of $\mathcal{K}$, $[y]_\sim\subseteq[y']_{\sim'}$.  We define $B(s)$ to consist of all antichains (under $\leq$) in $E(s)$.
\end{itemize}

We often identify the element $(y,\sim)$ of $E(y)$ with the equivalence class $[y]_\sim$.  Taken literally, this causes minor technical problems in the case where $\mathfrak{S}$ is small enough that $[y]_\sim$ and $[y]_{\sim'}$ coincide in $\mathfrak{S}$ even though they are not the same set in larger structures.

\begin{definition}
If $\{f_n\}$ is a set of measurable functions, the \emph{canonical exchangeable process $\mathfrak{X}^{\{f_n\}}$ generated by the $\{f_n\}$} is defined for each $\mathfrak{S}\in\mathcal{K}$ by choosing
 \begin{equation}\label{eq:bits}
\begin{array}{l}
  \bullet\quad \xi_\tau\text{ uniformly in }[0,1]\text{ and }\prec_\tau\text{ a uniform random ordering of }\tau,\text{ for each }\\
  \text{finite set }t\subseteq |\mathfrak{S}|\text{ and each }\tau\in B(t),\text{ and}\\
  \bullet\quad
  \eta_r:U_{r,\mathfrak{S}}\rightarrow[\#(\sim_r)],\text{ a uniform random injective function for each }r\in[d]\\\text{ with }\#(\sim_r)\neq\infty,
  \end{array}
  \end{equation}
with all random variables chosen independently, and setting
\[\tp_{\mathfrak{X}^{\{f_n\}}(\mathfrak{S})}(s)=f_{|s|}(\mathfrak{S}|_{s},(\xi_\tau)_{\tau \in B(s)},(\prec_{t})_{t \subseteq s},(\eta_r\upharpoonright \mathfrak{S}|_s)_{r\mid \#(\sim_r)\neq\infty}),\]
where $\eta_r\upharpoonright\mathfrak{S}|_s$ denotes the domain restriction of $\eta_r$ to $V_{r,\mathfrak{S}|_s}/\sim_r$.

We say $\{f_n\}$, or equivalently $\mathfrak{X}^{\{f_n\}}$, is \emph{eq-symmetric} if the distribution of $\mathfrak{X}^{\{f_n\}}(\mathfrak{S})$ is independent of the variables $\eta_r$, that is, if for any $\mathfrak{S},\mathfrak{T}$,
\[\mathbb{P}(\mathfrak{X}^{\{f_n\}}(\mathfrak{S})=\mathfrak{T}\mid \{\eta_r\}_{r\mid \#(\sim_r)\neq\infty})=\mathbb{P}(\mathfrak{X}^{\{f_n\}}(\mathfrak{S})=\mathfrak{T}).\]
\end{definition}

We now state our main theorem.
\begin{theorem}\label{thm:main}
Let $\mathcal{K}$ be a Fra\"iss\'e class with $2$-DAP and with $\odap$-DAP up to a strongly orderly sequence of definable equivalence relations $\sim_1,\ldots,\sim_d$.  Let $\mathfrak{X}$ be $\mathcal{K}$-exchangeable.
Then there exist eq-symmetric functions $\{f_n\}$ such that $\mathfrak{X}^{\{f_n\}}=_{\mathcal{D}}\mathfrak{X}$.
\end{theorem}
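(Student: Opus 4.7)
The plan is to reduce Theorem \ref{thm:main} to the $\odap$-DAP representation theorem (\cite[Theorem 3.2]{CraneTowsner2015} and \cite{Ackerman2015}) by enriching each structure in $\mathcal{K}$ with data that names the equivalence classes of $\sim_1,\ldots,\sim_d$. Once the classes are named, the only obstruction to $\odap$-DAP is removed, and the standard theorem applies; the $\xi_\tau$ and $\eta_r$ in the target representation will materialize as the random data required to perform this enrichment.

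First, I would build an auxiliary multi-sorted Fra\"iss\'e class $\mathcal{K}^*$ whose structures $\mathfrak{S}^*$ consist of a base $\mathfrak{S}\in\mathcal{K}$ together with (i) for each $r$ with $\#(\sim_r)=\infty$, an injection of $U_{r,\mathfrak{S}}$ into a fresh sort of ``class names'', and (ii) for each $r$ with $\#(\sim_r)<\infty$, an injection $\eta_r:U_{r,\mathfrak{S}}\hookrightarrow[\#(\sim_r)]$. The strongly orderly hypothesis ensures that the $\sim_r$ can be ordered compatibly with containment so that these enrichments fit into a hierarchical sort structure without clashes, and the assumption that $\mathcal{K}$ has $\odap$-DAP up to $\sim_1,\ldots,\sim_d$ is exactly the statement that once classes are named, amalgamation plans of every size can be completed in $\mathcal{K}^*$. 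Hence $\mathcal{K}^*$ is a Fra\"iss\'e class with $\odap$-DAP.

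Second, I would lift $\mathfrak{X}$ to a $\mathcal{K}^*$-exchangeable process $\mathfrak{X}^*$ by independently sampling class names: for infinite-class relations, assign each equivalence class of $\sim_r$ a fresh name from an atomless probability space; for finite-class relations, choose $\eta_r$ uniformly at random from the injections $U_{r,\mathfrak{S}}\hookrightarrow[\#(\sim_r)]$. Because the class names are sampled i.i.d.\ and the $\eta_r$ are chosen uniformly, the lift inherits $\mathcal{K}^*$-exchangeability, and marginalizing the names recovers $\mathfrak{X}$.

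Third, I would apply the $\odap$-DAP representation theorem to $\mathfrak{X}^*$, obtaining measurable functions $\{g_n\}$ so that
\[\tp_{\mathfrak{X}^*(\mathfrak{S}^*)}(s^*)=g_{|s^*|}\bigl(\mathfrak{S}^*|_{s^*},(\xi_{t^*})_{t^*\subseteq s^*},(\prec_{t^*})_{t^*\subseteq s^*}\bigr).\]
The critical translation step is to identify the finite ``enriched'' subsets $s^*$ that project onto a given $s\subseteq|\mathfrak{S}|$ with data indexed by antichains in $E(s)$: a subset of class names appearing above $s$ is exactly the datum of an antichain $\tau\in B(s)$, so $(\xi_{t^*})_{t^*\subseteq s^*}$ organizes into $(\xi_\tau)_{\tau\in B(s)}$, while the orderings $\prec_{t^*}$ on mixed subsets collapse (after absorbing symmetry) to the uniform orderings $\prec_t$ and $\prec_\tau$ described in \eqref{eq:bits}. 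Repackaging $g_n$ according to this bijection and restricting the $\eta_r$ arguments to $V_{r,\mathfrak{S}|_s}/\sim_r$ yields the desired $\{f_n\}$. Eq-symmetry is automatic: because the $\eta_r$ in the lift were chosen uniformly and independently of everything else, the distribution of $\mathfrak{X}^{\{f_n\}}(\mathfrak{S})$ is invariant under relabeling of the values of $\eta_r$.

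The main obstacle is step three, the bookkeeping of the translation. One must verify that the antichain index set $B(s)$ indeed captures all and only the random data that can appear in a representation, that the strongly orderly condition rules out pathological coincidences between class names at different levels (the small-structure issue flagged in Example \ref{ex:two_eq}), and that measurability and symmetry of the repackaged $f_n$ are preserved. A careful choice of canonical representatives for the enrichment, together with the well-ordering supplied by strong orderliness, should handle this; everything else is a rather mechanical adaptation of the $\odap$-DAP proof.
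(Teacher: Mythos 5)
Your high-level strategy --- enrich the structures so that the equivalence classes are named, check that the enriched class has $\odap$-DAP, apply the representation theorem of \cite{CraneTowsner2015} there, and translate the representation back --- is exactly the strategy of the paper, which carries it out one equivalence relation at a time via the classes $\mathcal{K}_{\sim_d}, \mathcal{K}_{\sim_d,\sim_{d-1}},\ldots$ and the lifting theorems of Section \ref{sec:representations}. However, there is a genuine gap in your construction of $\mathcal{K}^*$ for the relations with $\#(\sim_r)=\infty$. If the ``class names'' are new elements of a fresh sort and each $v\in V_{r,\mathfrak{S}}$ is linked to the name of its class by a relation, that relation is a partial function from elements to names, and such a function is itself an obstruction to $3$-DAP: with $1=v$, $2=c$, $3=c'$, take $\mathfrak{S}_1$ on $\{c,c'\}$ (two distinct names), $\mathfrak{S}_2$ on $\{v,c'\}$ asserting that $v$ is named $c'$, and $\mathfrak{S}_3$ on $\{v,c\}$ asserting that $v$ is named $c$. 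These agree on all pairwise restrictions (the singleton $\{v\}$ carries no naming information), yet no amalgam exists. This plan is \emph{not} excluded by the hypothesis that $\mathcal{K}$ has $\odap$-DAP up to $\sim_1,\ldots,\sim_d$, because Definition \ref{defn:upto} only promises amalgams for \emph{coherent} partition labelings, and here the two labelings of $v$ disagree. If instead you make the names external labels, as you do for the finite-class relations, you leave the setting of a finite relational signature, since infinitely many classes cannot be distinguished by finitely many predicates. So the assertion that ``$\mathcal{K}^*$ is a Fra\"iss\'e class with $\odap$-DAP'' does not follow as stated.

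The paper's resolution of exactly this difficulty is the doubling construction of Section \ref{sec:eliminating}: an element of the original structure is replaced by a \emph{pair} $(v,c)$ with $v$ ranging over a sort $V$ and $c$ over a sort $C$ of class names, with \emph{every} such pair admitted, so that no functional naming relation is needed and $\sim_d$ becomes the harmless relation ``equal second coordinate.'' This is the key idea your proposal is missing; it is also what forces the standing restriction $k_r=1$ when $\#(\sim_r)=\infty$, and it is why the back-translation in Theorem \ref{thm:lift_infinite} must manufacture the data $\xi_{\tau'},\prec_{\tau'}$ for the blurs of the doubled structure out of the $\xi_\tau,\prec_\tau$ of the original one via measure-preserving maps. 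Your step three correctly locates the bookkeeping burden, but without the doubling the object you would be translating from does not have the amalgamation property you need. Your treatment of the finite-class relations (uniformly random injections $\eta_r$, with eq-symmetry coming from the uniformity of the choice) does match the paper's Theorem \ref{thm:lift_finite}.
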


\begin{cor}\label{thm:main_cor}
  Let $\mathfrak{M}$ be ultrahomogeneous and suppose $\age(\mathfrak{M})$ satisfies $2$-DAP and $\odap$-DAP up to a strongly orderly sequence of definable equivalence relations $\sim_1,\ldots,\sim_d$.  Let $\mathfrak{X}$ be $\mathfrak{M}$-exchangeable.
Then there are eq-symmetric functions $\{f_n\}$ such that $\mathfrak{X}^{\{f_n\}}\equalinlaw\mathfrak{X}$, for $\xi_\tau,\prec_\tau$, and  $\eta_r$ chosen as in \eqref{eq:bits} and setting
\[\tp_{\mathfrak{X}^{\{f_n\}}}(s)=f_{|s|}(\mathfrak{M}|_{s},(\xi_\tau)_{\tau \in B(s)},(\prec_{t})_{t \subseteq s},(\eta_r\upharpoonright \mathfrak{M}|_s)_{r\mid\#(\sim_r)\neq\infty})\]
for each $s\subseteq\Nb$.
\end{cor}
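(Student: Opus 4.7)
The plan is to derive the corollary from Theorem \ref{thm:main} by invoking the equivalence between $\mathcal{K}$-exchangeable families and $\mathfrak{M}$-exchangeable structures on $\mathbb{N}$ described after Definition \ref{defn:rel exch}. Since $\mathfrak{M}$ is ultrahomogeneous and $\mathcal{K}=\age(\mathfrak{M})$, this correspondence is faithful up to equality in distribution, so the proof amounts to a translation plus a bookkeeping check on random variables.

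First, from $\mathfrak{X}$ I build the associated $\mathcal{K}$-exchangeable family $\{\mathfrak{X}(\mathfrak{S})\}_{\mathfrak{S}\in\mathcal{K}}$ by $\mathfrak{X}(\mathfrak{S}):=\mathfrak{X}^{\rho_{\mathfrak{S},<}}$, as in the discussion preceding Theorem \ref{thm:main}; the $\mathfrak{M}$-exchangeability of $\mathfrak{X}$ makes the distribution independent of the ordering $<$. Applying Theorem \ref{thm:main} then yields eq-symmetric $\{f_n\}$ with $\mathfrak{X}^{\{f_n\}}(\mathfrak{S})\equalinlaw\mathfrak{X}(\mathfrak{S})$ for every $\mathfrak{S}\in\mathcal{K}$.

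Next, I use the same $\{f_n\}$ to construct a single random $\mathcal{L}'$-structure $\mathfrak{Y}$ on $\mathbb{N}$ via the formula stated in the corollary, driven by one global collection of variables $\xi_\tau,\prec_\tau,\eta_r$ drawn once as in \eqref{eq:bits}, now indexed over all finite $t\subseteq\mathbb{N}$ and antichains $\tau\in B(t)$ inside $E(\mathbb{N})$, with each $\eta_r$ a uniform random injection $U_{r,\mathfrak{M}}\to[\#(\sim_r)]$. To show $\mathfrak{Y}\equalinlaw\mathfrak{X}$, it is enough to match finite-dimensional distributions. For each finite $s\subseteq\mathbb{N}$, the restriction $\mathfrak{Y}|_s$ depends only on $\xi_\tau$ for $\tau\in B(s)$, on $\prec_t$ for $t\subseteq s$, and on the restrictions $\eta_r\upharpoonright\mathfrak{M}|_s$. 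Because $E(s)$ and $B(s)$ are computed in the Fra\"iss\'e limit $\mathfrak{M}$, the joint law of these variables coincides exactly with that of the variables appearing in the canonical exchangeable process $\mathfrak{X}^{\{f_n\}}(\mathfrak{M}|_s)$, and the same $\{f_n\}$ is applied to the same $\mathfrak{M}|_s$. Hence $\mathfrak{Y}|_s\equalinlaw\mathfrak{X}^{\{f_n\}}(\mathfrak{M}|_s)\equalinlaw\mathfrak{X}(\mathfrak{M}|_s)$.

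Finally, $\mathfrak{X}(\mathfrak{M}|_s)=\mathfrak{X}^{\rho_{\mathfrak{M}|_s,<}}$ and $\mathfrak{X}|_s$ are pullbacks of $\mathfrak{X}$ along two embeddings of $\mathfrak{M}|_s$ into $\mathfrak{M}$ (the canonical one and the inclusion), so $\mathfrak{M}$-exchangeability gives $\mathfrak{X}(\mathfrak{M}|_s)\equalinlaw\mathfrak{X}|_s$. Chaining these equalities produces $\mathfrak{Y}|_s\equalinlaw\mathfrak{X}|_s$ for every finite $s\subseteq\mathbb{N}$, hence $\mathfrak{Y}\equalinlaw\mathfrak{X}$. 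The only genuine point to verify is the bookkeeping in the previous paragraph: that the single global collection of $\xi_\tau,\prec_\tau,\eta_r$ on $\mathbb{N}$ restricts compatibly to the variables used in the $\mathcal{K}$-exchangeable construction on each $\mathfrak{M}|_s$. This is guaranteed by indexing blurs and equivalence classes in the Fra\"iss\'e limit $\mathfrak{M}$ (as flagged in the footnote of Example \ref{ex:two_eq}), so that these indices are the same whether we view a class as belonging to $\mathfrak{M}|_s$ or to all of $\mathfrak{M}$.
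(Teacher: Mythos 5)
Your proof is correct and follows exactly the route the paper intends: the corollary is deduced from Theorem \ref{thm:main} via the correspondence between $\mathcal{K}$-exchangeable families and $\mathfrak{M}$-exchangeable structures set up after Definition \ref{defn:rel exch}, with equality in distribution checked on finite marginals (which is how $\equalinlaw$ is defined in the paper). Your extra care about the global randomness restricting compatibly to each $\mathfrak{M}|_s$, justified by indexing blurs and classes in the Fra\"iss\'e limit, is exactly the right bookkeeping point.
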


\subsection{Application to Austin--Panchenko representation}\label{sec:AP}

Although we have described our setting in terms of random $\mathcal{L}$-structures, our framework can also handle $\mathfrak{M}$-exchangeable real valued random structures.  We take a signature $\mathcal{L}'$ consisting of countably many unary symbols $U_1,U_2,\ldots$ and when $\mathfrak{S}$ is an $\mathcal{L}'$-structure, we assign a real value to each $x\in|\mathfrak{S}|$ by $v(x)=\sum_i 2^{-i}\chi_{U_i^{\mathfrak{S}}}(x)$, where $\chi_{U_i^{\mathfrak{S}}}(x)=1$ if $x\in U_i^{\mathfrak{S}}$ and $\chi_{U_i^{\mathfrak{S}}}(x)=0$ otherwise.  Conversely, if $v:S\rightarrow[0,1]$, we define a structure $\mathfrak{S}$ with $|\mathfrak{S}|=S$ by putting $x\in U_i^{\mathfrak{S}}$ if and only if the $i$-th bit of the binary expansion of $v(x)$ equals $1$.  (Note that we can replace $[0,1]$ with any standard Borel space.)

As an example of our general framework, we now show how it generalizes the results of \cite{AustinPanchenko2014}.  We first consider arrays of random variables $(X_\alpha)_{\alpha\in\mathbb{N}^r}$ with values from $[0,1]$.  We will consider $\mathbb{N}^r$ as an $\mathcal{L}$-structure (the exact structure will be described below), so that, using the previous paragraph, we can define a random $\mathcal{L}'$-structure $\mathfrak{X}$ on $\mathbb{N}^r$ where $v(\mathfrak{X}(\mathbb{N}^r|_{\{\alpha\}}))=X_\alpha$.  We consider a class of permutations $H_r$ consisting of those permutations $\pi:\mathbb{N}^r\rightarrow\mathbb{N}^r$ which preserve initial segments; that is, if $\alpha$ and $\beta$ first differ at the $k$-th place then $\pi(\alpha)$ and $\pi(\beta)$ also first differ at the $k$-th place.

In our notation, \cite{AustinPanchenko2014} shows that if 
\[(\mathfrak{X}(\mathbb{N}^r|_{\{\pi(\alpha)\}}))_{\alpha\in\mathbb{N}^r}=_{\mathcal{D}}(\mathfrak{X}(\mathbb{N}^r|_{\{\alpha\}}))_{\alpha\in\mathbb{N}^r}\]
for each $\pi\in H_r$ then $\mathfrak{X}$ has a representation of the form
\[\tp_{\mathfrak{X}}(\alpha)=f((\xi_\beta)_{\beta\sqsubseteq\alpha}),\]
where $\beta\sqsubseteq\alpha$ means $\beta$ is an initial segment of $\alpha$.

To obtain this as a consequence of Corollary \ref{thm:main_cor}, we take an $\mathcal{L}$-structure $\mathfrak{M}$ with $|\mathfrak{M}|=\mathbb{N}^r$, where $\mathcal{L}=\{R_1,\ldots,R_{r-1}\}$.  We define $R_i^{\mathfrak{M}}=\{(\alpha,\beta)\in|\mathfrak{M}|^2\mid \forall j<i\ \alpha_j=\beta_j\}$, where $\alpha=\langle \alpha_0,\ldots,\alpha_{r-1}\rangle$, $\beta=\langle \beta_0,\ldots,\beta_{r-1}\rangle$.  It is easy to see that $H_r$ coincides with the automorphism group of $\mathfrak{M}$.

Note also that $\mathfrak{M}$ has $\odap$-DAP up to $\sim_{R_1},\ldots,\sim_{R_{r-1}}$, with $\sim_{R_{i+1}}^*=\sim_{R_i}$ and $\#(\sim_{R_i})=\infty$ for each $i$.  Our main theorem gives the representation
\[\tp_{\mathfrak{X}}(\alpha)=f((\xi_\tau)_{\tau\in B(\alpha)}).\]
It therefore suffices to observe the correspondence between $B(\alpha)=\{\alpha\}\cup\{[\alpha]_{\sim_{R_i}}\}_{i=1,\ldots,r-1}$ and $\{\beta\mid \beta\sqsubseteq\alpha\}$: each equivalence class $[\alpha]_{\sim_{R_i}}$ is determined by the initial segment $\langle \alpha_0,\ldots,\alpha_{i-1}\rangle\sqsubset\alpha$.

Austin and Panchenko \cite{AustinPanchenko2014} generalize this representation to random arrays $(X_\alpha)_{\alpha\in\mathbb{N}^{r_1}\times\cdots\times\mathbb{N}^{r_k}}$ with the property that
\[(X_{(\pi_1(\alpha_1),\ldots,\pi_k(\alpha_k))})=_{\mathcal{D}}(X_{(\alpha_1,\ldots,\alpha_k)})\]
when each $\pi_i\in H_{r_i}$.  They show that such arrays have a representation of the form
\[\tp_{\mathfrak{X}}(\alpha_1,\ldots,\alpha_k)=f((\xi_{\beta_1,\ldots,\beta_k})_{\beta_1\sqsubseteq\alpha_1,\ldots,\beta_k\sqsubseteq\alpha_k}).\]

We can obtain this as a consequence of Corollary \ref{thm:main_cor} with a bit of care.  We take a structure $\mathfrak{M}$ with $|\mathfrak{M}|=\mathbb{N}^{r_1}\times\cdots\mathbb{N}^{r_k}\times\mathbb{N}$, where $\mathcal{L}=\{R_{1,1},\ldots,R_{1,r_1},R_{2,1},\ldots,R_{2,r_2},\ldots,R_{k,1},\ldots,R_{k,r_k}\}$ and we define 
\[R_{m,i}^{\mathfrak{M}}=\{((\alpha_1,\ldots,\alpha_k,\alpha_+),(\beta_1,\ldots,\beta_k,\beta_+))\mid \forall j<i\ (\alpha_m)_i=(\beta_m)_i\}.\]
(Recall that $(\alpha_m)_i$ means the $i$-th coordinate of the sequence $\alpha_m$.)
We see that the automorphisms of $\mathfrak{M}$ are the maps $(\pi_1,\ldots,\pi_k,\pi_{+})$ so that each $\pi_i\in H_{r_i}$ and $\pi_{+}$ is an arbitrary permutation of $\mathbb{N}$.  $\mathfrak{M}$ has $\odap$-DAP up to $\sim_{R_{1,1}},\ldots,\sim_{R_{k,r_k-1}}$ where $\sim_{R_{i,j+1}}^*=\sim_{R_{i,j}}$, $\sim_{R_{i,1}}^*$ is the equivalence relation with only one class, and $\#(\sim_{R_{i,j}})=\infty$ for all $i,j$.  

The main oddity here is the extra coefficient $\alpha_+$, which we will ultimately need to eliminate by extending the random variables $X_{(\alpha_1,\ldots,\alpha_k)}$ to an $\mathcal{L}'$-structure on $\mathfrak{M}$ so that $v(\tp_{\mathfrak{X}}(\alpha_1,\ldots,\alpha_k,\alpha_+))=X_{(\alpha_1,\ldots,\alpha_k)}$, ignoring $\alpha_+$.

Our main theorem gives the representation
\[\tp_{\mathfrak{X}}((\alpha_1,\ldots,\alpha_k,\alpha_+))=f((\xi_\tau)_{\tau\in B(\alpha_1,\ldots,\alpha_k,\alpha_+)}).\]
Since $\mathfrak{X}$ has the property that $\tp_{\mathfrak{X}}((\alpha_1,\ldots,\alpha_k,\alpha_+))=\tp_{\mathfrak{X}}((\alpha_1,\ldots,\alpha_k,\alpha'_+))$, we may further assume that $f((\xi_\tau)_{\tau\in B(\alpha_1,\ldots,\alpha_k,\alpha_+)})$ depends only on those $\tau$ which are independent of $\alpha_+$.

We must identify $B(\alpha)$ where $\alpha=\langle\alpha_1,\ldots,\alpha_k,\alpha_+\rangle$.  First, note that $E(\alpha)$ consists of $[\alpha]_=$ together with each of the equivalence relations $[\alpha]_{R_{i,j}}$.  The only element of $E(\alpha)$ depending on $\alpha_+$ is the singleton $\{[\alpha]_=\}$, so we should obtain an identification of $B(\alpha)\setminus\{\{[\alpha]_=\}\}$ with $\{\langle\beta_1,\ldots,\beta_k\rangle\mid \beta_1\sqsubseteq\alpha_1,\ldots,\beta_k\sqsubseteq\alpha_k\}$.

  Since $[\alpha]_{R_{i,j}}\subseteq[\alpha]_{R_{i,j'}}$ whenever $j>j'$, any antichain in $E(\alpha)$ contains at most one $[\alpha]_{R_{i,j}}$ for each $i$.  We can identify the choice $[\alpha]_{R_{i,j}}$ with the initial segment $\langle(\alpha_i)_0,\ldots,(\alpha_i)_{j-1}\rangle$ of $\alpha_i$.  If some antichain $\tau\in B(\alpha)\setminus\{\{[\alpha]_=\}\}$ does not contain any $[\alpha]_{R_{i,j}}$ for some $i$, we can identify this with the empty initial segment of $\alpha_i$.

So given $\tau\in B(\alpha)\setminus\{\{[\alpha]_=\}\}$, we define 
\[\beta^\tau_i=\left\{\begin{array}{cc}
\langle(\alpha_i)_0,\ldots,(\alpha_i)_{j-1}\rangle, & [\alpha]_{R_{i,j}}\in\tau,\\
\emptyset,& \text{otherwise}.
\end{array}\right.\]  
This gives a bijection $\tau\mapsto\langle\beta^\tau_1,\ldots,\beta^\tau_k\rangle$ between $B(\alpha)\setminus\{\{[\alpha]_=\}\}$ and $\{\langle\beta_1,\ldots,\beta_k\rangle\mid \beta_1\sqsubseteq\alpha_1,\ldots,\beta_k\sqsubseteq\alpha_k\}$, showing that the two representations are identical.

\section{Sequences of Equivalence Relations}\label{sec:sequences}

\subsection{Explicit Formulas}

\begin{definition}
A pair of formulas $\phi(\vec x,\vec y)$ and $\psi(\vec x)$, where $|\vec x|=|\vec y|=n$, with exactly the displayed free variables and no parameters \emph{defines an equivalence relation in $\mathcal{K}$} if, for every $\mathfrak{S}\in\mathcal{K}$, $\sim_{\phi,\psi}=\{(\vec a,\vec b)\in |\mathfrak{S}|^{2n}\mid\mathfrak{S}\vDash\phi(\vec a,\vec b)\}$ is an equivalence relation on $\{\vec a\mid\mathfrak{S}\vDash\psi(\vec a)\}$.  We call $\sim_{\phi,\psi}$ a \emph{definable equivalence relation of length $n$} and we say $\phi$ \emph{defines} $\sim_{\phi,\psi}$ \emph{inside} $\psi$.
\end{definition}

We often omit $\psi$ and simply write $\sim_\phi$ for a definable equivalence relation.

Since $\phi$ is already a formula, it does not change the automorphism group of an $\mathcal{L}$-structure if we add a predicate symbol for $\phi$ to the signature $\mathcal{L}$.  We therefore, without loss of generality, restrict our consideration to the following case.

\begin{definition}
  We say a definable equivalence relation $\sim_{\phi,\psi}$ (in a Fra\"iss\'e class $\mathcal{K}$ of $\mathcal{L}$-structures) is \emph{defined by symbols} if there are predicate symbols $P,V\in\mathcal{L}$ such that $\sim_{\phi,\psi}$ is equal to $\sim_{P\vec x\vec y,V\vec x}$.
\end{definition}

Some equivalence relations do not present an obstacle to amalgamation.  For example, the formula $\phi(x,y,z,w)$ given by $x=z$ defines an equivalence relation of length $2$.  Similarly, for any formula $\psi(x)$, the formula $\phi(x,y)$ given by $\psi(x)\leftrightarrow\psi(y)$ defines an equivalence relation of length $1$.  Both of these definable equivalence relations are present in any class of structures, including those with $n$-DAP.

More generally, if $\sim_\phi$ is some definable equivalence relation---possibly one which does prevent $n$-DAP---there are other equivalence relations refining it which do not represent ``new'' obstacles to $n$-DAP.   For instance, if $\phi(x,y)$ defines an equivalence relation of length $1$, $\phi'(x,y)$ given by $\phi(x,y)\wedge(\psi(x)\leftrightarrow\psi(y))$ defines a new equivalence relation that refines $\sim_\phi$ but is not a new obstacle to $n$-DAP.

We capture this in the notion of an explicit formula: essentially, if an equivalence relation is defined by a formula explicit in $\phi_1,\ldots,\phi_d$ then it does not represent a new obstacle to $n$-DAP beyond that represented by $\sim_{\phi_1},\ldots,\sim_{\phi_d}$.

\begin{definition}
Let $\mathfrak{M}$ be a structure and $\phi_1,\ldots,\phi_d$ and $\psi_1,\ldots,\psi_d$ be sequences of formulas where each $\phi_r$ defines an equivalence relation of length $k_r$ inside $\psi_r$.  We say a formula $\psi(x_1,\ldots,x_n,y_1,\ldots,y_m)$ is \emph{basic explicit} in $\phi_1,\ldots,\phi_d$ if it has one of the forms:
  \begin{itemize}
  \item $\psi_r(x_{i_1},\ldots,x_{i_{k_r}})\wedge\psi_r(y_{j_1},\ldots,y_{j_{k_r}})\wedge \phi_s(x_{i_1},\ldots,x_{i_{k_r}},y_{j_1},\ldots,y_{j_{k_r}})$ for some $r\leq d$ and some $i_1,\ldots,i_{k_r}\leq n$ and $j_1,\ldots,j_{k_r}\leq m$,
  \item $x_i=y_j$ for $i\leq n$, $j\leq m$,
  \item $\theta(x_1,\ldots,x_n)$ for some $\theta$, or
  \item $\theta(y_1,\ldots,y_m)$ for some $\theta$.
  \end{itemize}
A formula is \emph{explicit} if it is a boolean combination of basic explicit formulas.
A definable equivalence relation $\sim$ is \emph{explicit in $\mathfrak{M},\phi_1,\ldots,\phi_d,\psi_1,\ldots,\psi_d$} if $\sim$ is defined by an explicit formula.
\end{definition}
Explicit formulas are allowed to consider $\vec x$ and $\vec y$ \emph{separately}, but may only compare $\vec x$ with $\vec y$ using the established equivalence relations (including $=$).  Note that the common refinement of two explicit equivalence relations is also explicit.

\subsection{Strongly Orderly Sequences of Equivalence Relations}
We would like to name equivalence classes simply by assigning a number to each class, which requires knowing how many classes there are inside each class of a coarser equivalence relation.

We avoid further notational complications by ruling out the following three cases.
\begin{example}
  Let $\mathcal{L}$ have have two binary relation symbols $R$ and $S$ and a unary relation symbol $P$ and let $\mathcal{K}$ contain all structures $\mathfrak{S}$ in which $R$ and $S$ are interpreted as equivalence relations such that
  \begin{itemize}
  \item each equivalence class of $R^{\mathfrak{S}}$ and $S^{\mathfrak{S}}$ is either contained in or disjoint from $P^{\mathfrak{S}}$,
  \item when restricted to $P^{\mathfrak{S}}$, the equivalence classes of $S^{\mathfrak{S}}$ refine the classes of $R^{\mathfrak{S}}$, and
  \item when restricted to the complement of $P^{\mathfrak{S}}$, the equivalence classes of $R^{\mathfrak{S}}$ refine the classes of $S^{\mathfrak{S}}$.
  \end{itemize}

In this case, numbering equivalence classes is complicated because on $P^{\mathfrak{S}}$ we need to number a class of $S^{\mathfrak{S}}$ relative to a class of $R^{\mathfrak{S}}$, while on the complement we need to number a class of $R^{\mathfrak{S}}$ relative to a class of $S^{\mathfrak{S}}$.  We will restrict ourselves to the case where there is a single finest (in a certain sense) coarsening of each $\sim_i$, and where, further, each class of the coarser relation is refined into the same number of classes of $\sim_i$.  The example above must be handled in our framework using four equivalence relations, two on the domain $P$ and two on the domain the complement of $P$.
\end{example}
\begin{example}
Let $\mathcal{L}$ have a single quaternary relation $R$ and $\mathcal{K}$ contain those structures in which $R$ is interpreted as an equivalence relation on ordered pairs of distinct elements so that there are at most three classes and $(i,j)\not\sim_R (j,i)$.

In this case, numbering equivalence classes is complicated because we need to coordinate the assignment of numbers to $(i,j)$ and $(j,i)$.
\end{example}
\begin{example}
Let $\mathcal{L}$ have two binary relations $R$ and $S$ and let $\mathcal{K}$ contain those structures $\mathfrak{S}$ in which $R^{\mathfrak{S}}$ is an equivalence relation with at most three equivalence classes arranged in some order---say we call the classes $r_1,r_2,r_3$---and $S^{\mathfrak{S}}$ consists of those $(a,b)$ such that the label of the equivalence class in $R^{\mathfrak{S}}$ containing $b$ is one more than the label of the equivalence class in $R^{\mathfrak{S}}$ containing $a$, mod $3$, that is, $S^{\mathfrak{M}}=(r_1\times r_2)\cup (r_2\times r_3)\cup (r_3\times r_1)$.  (Note that the classes are not really labeled---in particular, in the Fra\"iss\'e limit, the classes are isomorphic.)

In this case, assigning equivalence classes is complicated because the equivalence classes are not fully symmetric.  If we attempt to permute class $r_1$ to class $r_2$, we are forced to move class $r_2$ to $r_3$.  Our framework could be extended by weakening the symmetry requirements we put on our representation, but we do not treat this further complication here.
\end{example}

We avoid these above cases by placing the following technical restrictions on the equivalence classes we consider.  It is much simpler to define these conditions using the behavior of equivalence classes in the Fra\"iss\'e limit.
\begin{definition}\label{defn:strongly orderly}

Let $\mathcal{K}$ be a Fra\"iss\'e class and $\mathfrak{M}$ its Fra\"iss\'e limit.

If $\sim,\sim^*$ are two equivalence relations of the same length, we say $\sim^*$ \emph{evenly contains} $\sim$ if $\sim^*$ is coarser than $\sim$, that is, every equivalence class of $\sim$ is contained in an equivalence class of $\sim^*$, and every equivalence class of $\sim^*$ contains the same number (from $\mathbb{N}\cup\{\infty\}$) of equivalence classes of $\sim$.

We say $\sim^*$ \emph{freely contains} $\sim$ if $\sim^*$ evenly contains $\sim$ and further, for any equivalence class $D$ of $\sim^*$, any finite set $D_1,\ldots,D_v$ of distinct equivalence classes of $\sim^*$ contained in $D$, and any permutation $\pi:[v]\rightarrow[v]$, there is an automorphism $\hat \pi$ of $\mathfrak{M}$ mapping $D_i$ to $D_{\pi(i)}$ and mapping all equivalence classes of $\sim$ not in $D$ to themselves.

If $\sim,\sim',\sim^*$ are three equivalence relations of the same length and $\sim'$ refines $\sim^*$, we say $\sim$ is \emph{orthogonal to} $\sim'$ within $\sim^*$ if $D\cap D'\neq\emptyset$ whenever $D^*$ is an equivalence class of $\sim^*$ and $D,D'$ are equivalence classes of $\sim,\sim'$, respectively, with $D,D'\subseteq D^*$.

When $\sim$ evenly contains $\sim'$, we write $\#_{\sim}(\sim')$ for the number (in $\mathbb{N}\cup\{\infty\})$ of classes of $\sim'$ in each class of $\sim$ (in $\mathfrak{M}$).

  We say a sequence of definable equivalence relations $\sim_{1},\ldots,\sim_{d}$ in $\mathcal{K}$ is \emph{orderly} if for each $r\leq d$ there is a definable equivalence relation $\sim_r^*$ such that
  \begin{itemize}
  \item $\sim_r^*$ is explicit in $\sim_1,\ldots,\sim_{r-1}$,
  \item $\sim_r^*$ freely contains $\sim_r$ (in $\mathfrak{M}$),
  \item if $\#_{\sim_r^*}(\sim_r)=\infty$ then $\sim_r$ has length $1$, and
  \item every equivalence relation explicit in $\sim_1,\ldots,\sim_{r-1}$ and finer than $\sim_r^*$ is orthogonal to $\sim_r$ (in $\mathfrak{M}$).
  \end{itemize}

We write $\#(\sim_r)$ for $\#_{\sim_r^*}(\sim_r)$.  When $\vec x=\langle x_1,\ldots,x_k\rangle$ is a sequence, we write $\rng\vec x$ for the set $\{x_1,\ldots,x_r\}$.

We say $\sim_1,\ldots,\sim_d$ is \emph{strongly orderly} if $\sim_1,\ldots,\sim_d$ is orderly and if, for every $r$ and every $\vec x,\vec y$ where $|\vec x|=|\vec y|$ is the length of $\sim_r$ and $\rng\vec x=\rng\vec y$, $\vec x\sim_r^*\vec y$ implies $\vec x\sim_r\vec y$.
\end{definition}

\subsection{Naming Equivalence Classes}

\begin{definition}
  Let $\mathcal{L}=\{R_1,\ldots,R_k\}$ be a signature, let $\mathcal{K}$ be a Fra\"iss\'e class of $\mathcal{L}$-structures and suppose $\sim_1,\ldots,\sim_d$ is a strongly orderly sequence of definable equivalence relations in $\mathcal{K}$ so that $\sim_r$ has length $k_r$ for each $r=1,\ldots,d$.
For $\mathfrak{S}\in\mathcal{K}$, we write $V_{r,\mathfrak{S}}$ as the domain of $\sim_r$ in $\mathfrak{S}$ and
\[U_{r,\mathfrak{S}}=V_{r,\mathfrak{S}}/\sim_r.\]

A \emph{partition labeling} of $\mathfrak{S}$ is a family of injective functions $(\eta^r)_{1\leq r\leq d}$ with each $\eta^r:U_{r,\mathfrak{S}}\rightarrow [\#(\sim_r)]$.
\end{definition}
By abuse of notation, we often interpret $\eta^r$ as a function on $V_{r,\mathfrak{S}}$ by composing with the projection onto $U_{r,\mathfrak{S}}$, writing $\eta^r(x)$ in place of $\eta^r([x]_{\sim_r})$ as convenient.  Importantly, a partition labeling assigns $x$ and $y$ the same value if and only if they belong to the same equivalence class.

When $\#(\sim_r)=\infty$, we often relax the requirement that the range of $\eta^r$ be $\mathbb{N}$, since it does no harm to replace the range with some other countable set.  When $\#(\sim_r)\in\mathbb{N}$, we always insist that the range of $\eta^r$ be exactly $[\#(\sim_r)]$ because we need to ensure that the range is the correct size.

\begin{definition}\label{defn:upto}
Suppose that $(\mathfrak{S}_j)_{j\leq n}$ is an amalgamation plan and that $\eta^r_j$ is a partition labeling of $\mathfrak{S}_j$ for each $1\leq j\leq n$.  We call $(\eta^r_j)$ \emph{coherent} if $\eta^r_j(\vec y)=\eta^r_{j'}(\vec y)$ whenever $\rng\vec y= s\subseteq|\mathfrak{S}_j|\cap|\mathfrak{S}_{j'}|$ and $\vec y\in V_{r,\mathfrak{S}_j}$ (so also $\vec y\in V_{\mathfrak{S}_{j'}}$).

We say $\mathcal{K}$ \emph{has $n$-DAP up to $\sim_1,\ldots,\sim_d$} if, whenever $(\mathfrak{S}_j)_{j\leq n}$ is an amalgamation plan with each $\mathfrak{S}_j\in\mathcal{K}$ and $(\eta^r_j)_{r,j}$ is a coherent sequence of partition labelings, there is an amalgam $\mathfrak{S}\in\mathcal{K}$.
\end{definition}
The $\eta^r_j$ can be viewed as labels to equivalence classes.  Then $n$-DAP up to $\sim_1,\ldots,\sim_d$ says that we can amalgamate an amalgamation plan as long as the equivalence classes in the $\mathfrak{S}_j$ can be reconciled in a coherent way.

We note, for later use, that once we have $\odap$-DAP, we can amalgamate more complicated collections of structures as well.
\begin{theorem}\label{thm:complex_amalgamation}
  Suppose $\mathcal{K}$ has $n$-DAP up to $\sim_1,\ldots,\sim_d$, that $(\mathfrak{S}_i)_{i\leq n}\subseteq\mathcal{K}$ for each $i\leq n$, that $(\eta^r_i)$ is a coherent partition labeling of the $\mathfrak{S}_i$, and that $\mathfrak{S}_i|_S$ is identical to $\mathfrak{S}_{i'}|_S$ whenever $S\subseteq|\mathfrak{S}_i|\cap|\mathfrak{S}_{i'}|$.  Then there exists $\mathfrak{S}\in\mathcal{K}$ such that $|\mathfrak{S}|=\bigcup_{i\leq n}|\mathfrak{S}_i|$ and $\mathfrak{S}|_{|\mathfrak{S}_i|}$ is identical to $\mathfrak{S}_i$ for every $i\leq n$.
\end{theorem}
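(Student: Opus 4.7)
The plan is to prove Theorem \ref{thm:complex_amalgamation} by induction on $|X|$, where $X = \bigcup_{i \leq n} |\mathfrak{S}_i|$, with the key amalgamation step supplied by $n$-DAP up to $\sim_1, \ldots, \sim_d$ applied to a normalized amalgamation plan. Writing $A_i = |\mathfrak{S}_i|$, the base case arises when some $A_{i^*}$ equals $X$: the consistency hypothesis $\mathfrak{S}_{i^*}|_{A_j} = \mathfrak{S}_j$ immediately makes $\mathfrak{S}_{i^*}$ itself the desired amalgam.

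For the inductive step, I would pick $x_0 \in X$ together with an index $i_0$ such that $x_0 \notin A_{i_0}$, and set $X' = X \setminus \{x_0\}$ and $\mathfrak{S}_i' = \mathfrak{S}_i|_{A_i \setminus \{x_0\}}$. The collection $(\mathfrak{S}_i')_{i \leq n}$ satisfies the hypotheses on the strictly smaller universe $X'$ with coherent labelings inherited by restriction, so the inductive hypothesis yields $\mathfrak{S}^* \in \mathcal{K}$ on $X'$ extending each $\mathfrak{S}_i'$. To reincorporate $x_0$, let $J = \{j : x_0 \in A_j\}$; since $i_0 \notin J$ we have $|J| \leq n - 1$, so I would choose $n$ distinct witnesses $y_1, \ldots, y_n \in X$ with $y_{i_0} = x_0$ and recursively construct, for each $i$, a structure $\mathfrak{T}_i \in \mathcal{K}$ on $X \setminus \{y_i\}$ amalgamating the appropriate restriction of $\mathfrak{S}^*$ with those $\mathfrak{S}_j$ (for $j \in J$ with $y_i \notin A_j$) via the outer induction. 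The family $(\mathfrak{T}_i)_{i \leq n}$ is meant to form an amalgamation plan of size $n$ on $X$ with a coherent joint labeling inherited from the original $(\eta^r_i)$ together with the labelings produced in constructing $\mathfrak{S}^*$ and the $\mathfrak{T}_i$; a final application of $n$-DAP up to $\sim_1, \ldots, \sim_d$ then produces the desired amalgam $\mathfrak{S} \in \mathcal{K}$ restricting to each $\mathfrak{T}_i$, and hence to each $\mathfrak{S}_i$.

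The main obstacle will be ensuring that the separately constructed $\mathfrak{T}_i$ actually agree on their pairwise intersections $X \setminus \{y_i, y_j\}$ and carry a jointly coherent partition labeling, as required for the amalgamation-plan hypothesis of $n$-DAP. I expect to address this by building the $\mathfrak{T}_i$ simultaneously with synchronized choices (rather than by independent recursive calls), propagating labeling and type decisions across the parallel branches of the recursion. The strongly orderly property of $\sim_1, \ldots, \sim_d$ -- specifically, the existence for each $r$ of a coarser equivalence relation $\sim_r^*$ that freely contains $\sim_r$ -- is what makes such synchronized labeling choices possible for new equivalence classes encountered when $x_0$ is reincorporated, and is the key ingredient for verifying that the recursion preserves coherence at every level.
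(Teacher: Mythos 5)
Your overall strategy---induction on $\#\bigl(\bigcup_{i\le n}|\mathfrak{S}_i|\bigr)$ with a final appeal to DAP up to $\sim_1,\ldots,\sim_d$---is the same as the paper's, but your inductive step has a genuine gap at the final amalgamation. Write $X=\bigcup_{i\le n}|\mathfrak{S}_i|$ and $m=\#X$. By the paper's definition, an amalgamation plan whose amalgam has universe $X$ must consist of $m$ structures, one with universe $X\setminus\{x\}$ for \emph{each} $x\in X$. Your family $(\mathfrak{T}_i)_{i\le n}$ has only $n$ members, on $X\setminus\{y_i\}$ for $n$ chosen witnesses; when $m>n$ this is not an amalgamation plan of size $n$ (the universes are too large) nor of size $m$ (there are too few structures), so neither $n$-DAP nor $m$-DAP up to $\sim_1,\ldots,\sim_d$ can be applied to it. The paper instead produces, for \emph{every} $x\in X$, an amalgam $\mathfrak{S}_x$ of the restrictions $\mathfrak{S}_i|_{|\mathfrak{S}_i|\setminus\{x\}}$ via the inductive hypothesis (together with a coherent extension of the labelings), obtaining a genuine amalgamation plan of size $m$, and then applies $m$-DAP. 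Note that this means the hypothesis actually used is $\odap$-DAP up to $\sim_1,\ldots,\sim_d$ (as the paper's own proof says explicitly), not merely $n$-DAP for the $n$ counting the given structures; your attempt to get by with a single application of $n$-DAP cannot succeed as stated.

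Separately, the obstacle you flag---that independently constructed amalgams need not agree on pairwise intersections, as the amalgamation-plan hypothesis requires---is real but is not resolved by your proposal: ``building the $\mathfrak{T}_i$ simultaneously with synchronized choices'' restates the problem rather than solving it. A concrete fix is to construct the $\mathfrak{S}_x$ one at a time, each time adding the restrictions of the previously built amalgams to the collection fed to the inductive hypothesis; this is exactly why the theorem is stated for arbitrary pairwise-consistent collections rather than only for amalgamation plans. Finally, your appeal to strong orderliness and free containment is misplaced in this proof: coherent labelings of the amalgams are obtained simply by extending the given injections $\eta^r_i$ to the classes of the amalgam, and no symmetry among equivalence classes is needed here.
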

\begin{proof}
We proceed by induction on $\bigcup_{i\leq n}|\mathfrak{S}_i|$.  Consider $S_0=\bigcup_{i\leq n}|\mathfrak{S}_i|\setminus\{x\}$ for some $x$, so that $\#S_0=\#(\bigcup_{i\leq n}|\mathfrak{S}_i|)-1$, where $\#A$ denotes the cardinality to avoid ambiguity, and consider all the structures $\mathfrak{S}'_i=\mathfrak{S}_i|_{|\mathfrak{S}_i|\cap S_0}$ with the restriction of the partition labeling to this set.  By the inductive hypothesis, there is an amalgam $\mathfrak{S}_x$, and we may choose a partition labeling $(\eta^r_x)$ coherent with the $(\eta^r_i)$.

Repeating this process for each choice of $x$, we have an amalgamation plan and a coherent partition labeling.  We apply the $\odap$-DAP of $\mathcal{K}$ up to $\sim_1,\ldots,\sim_d$ to obtain the desired $\mathfrak{S}$.
\end{proof}

\section{Eliminating Equivalence Relations}\label{sec:eliminating}

In this section we take a Fra\"iss\'e class $\mathcal{K}$ with $2$-DAP and with $\odap$-DAP up to some strongly orderly sequence $\sim_1,\ldots,\sim_d$ in a signature $\mathcal{L}$ and we expand both the signature and the structures to obtain a related class with $\odap$-DAP.  We always assume that $\sim_r$ is a definable equivalence relation of length $k_r$ and is defined by symbols.

\subsection{Finitely Many Classes}

Our approach to finite equivalence relations is straightforward: we add finitely many new predicate symbols naming each equivalence class.  The only complication is that we need to make precise, and then keep track of, the way in which these new predicate symbols are symmetric.

Throughout this subsection, we assume $\#(\sim_d)=v$ is finite.

\begin{definition}
Define a signature $\mathcal{L}_{\sim_d,v}$ extending $\mathcal{L}$ by $v$ new $k_d$-ary relations $P_{\sim_d,1},\ldots,P_{\sim_d,v}$.  

When $\mathfrak{S}$ is an $\mathcal{L}_{\sim_d,v}$-structure, write $\mathfrak{S}^-$ for the $\mathcal{L}$-structure obtained by forgetting the interpretations of the symbols $P_{\sim_d,i}$, so that $\mathfrak{S}^-$ is the \emph{reduct} of $\mathfrak{S}$ to $\mathcal{L}$.  When $\mathfrak{S}$ is an $\mathcal{L}$-structure and $\eta_d$ is a partition labeling, we write $\mathfrak{S}^{\eta_d}$ for the $\mathcal{L}_{\sim_d,v}$-structure obtained by extending $\mathfrak{S}$ with $P_{\sim_d,i}^{\mathfrak{S}^{\eta_d}}=\{\vec x\mid \eta_d(\vec x)=i\}$.

We define $\mathcal{K}_{\sim_d}$ to consist of those finite $\mathcal{L}_{\sim_d,v}$-structures of the form $\mathfrak{S}^{\eta_d}$ for some $\mathfrak{S}\in\mathcal{K}$ and $\eta_d$ a partition labeling.
\end{definition}

\begin{lemma}
  $\mathcal{K}_{\sim_d}$ is a Fra\"iss\'e class with $2$-DAP.
\end{lemma}

We need to characterize the way in which the relations $P_{\sim_d,i}$ are symmetric.

\begin{definition}
  If $P_1,\ldots,P_v$ are $k$-ary relations in $\mathcal{L}$, $\mathfrak{S}$ is an $\mathcal{L}$-structure, $\pi:[v]\rightarrow[v]$ is a permutation, and $S\subseteq|\mathfrak{S}|^{k}$, we define $\mathfrak{S}_{\pi\mid S}$ to be the structure given by
  \begin{itemize}
  \item $|\mathfrak{S}_{\pi\mid S}|=|\mathfrak{S}|$,
  \item $P_d^{\mathfrak{S}_{\pi\mid S}}=(P_{\pi^{-1}(d)}^{\mathfrak{S}}\cap S)\cup(P_d^{\mathfrak{S}}\setminus S)$,
  \item $R^{\mathfrak{S}_{\pi\mid S}}=R^{\mathfrak{S}}$ for any other relation symbol.
  \end{itemize}

When $\sim^*$ is a $k$-ary equivalence relation, we say a Fra\"iss\'e class $\mathcal{K}$ is \emph{symmetric in} $\{P_1,\ldots,P_v\}$ \emph{within} $\sim^*,V$ if $\mathfrak{S}_{\pi\mid S\cap V^{\mathfrak{S}}}\in\mathcal{K}$ whenever $\mathfrak{S}\in\mathcal{K}$, $S$ is a $\sim^*$-equivalence class, and $\pi$ is a permutation of $[v]$.
\end{definition}

It is immediate from the definition and the fact that $\sim^*_d$ freely contains $\sim_d$ that $\mathcal{K}_{\sim_d}$ is symmetric in $\{P_{\sim_d,1},\ldots,P_{\sim_d,v}\}$ within $\sim^*_d,V_{d,\mathcal{K}_{\sim_d}}$.  (Recall that $V_{d,\mathcal{K}_{\sim_d}}$ is the predicate defining the domain of $\sim_d$.)

Furthermore---because we will need to apply this process iteratively---we note that this definition preserves existing symmetries.
\begin{lemma}
  Suppose that $\mathcal{K}$ is symmetric in $\{Q_1,\ldots,Q_w\}$ within $\sim^*,V$ and that $\sim_d$ has a definition not involving any predicate $Q_i$.  Then $\mathcal{K}_{\sim_d}$ is also symmetric in $\{Q_1,\ldots,Q_w\}$ within $\sim^*,V$.
\end{lemma}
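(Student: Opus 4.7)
The plan is to unfold the definitions and observe that the permutation operation $\mathfrak{S}_{\pi\mid S\cap V^{\mathfrak{S}}}$ only alters the interpretations of the $Q_i$, leaving every other relation symbol (and in particular the $P_{\sim_d,j}$) untouched. Combined with the hypothesis that $\sim_d$ is definable without the $Q_i$, this will force the new structure to lie in $\mathcal{K}_{\sim_d}$.

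Concretely, I would start with an arbitrary $\mathfrak{S}\in\mathcal{K}_{\sim_d}$, which by definition has the form $\mathfrak{S}=\mathfrak{T}^{\eta_d}$ for some $\mathfrak{T}\in\mathcal{K}$ and some partition labeling $\eta_d$ of the $\sim_d$-classes of $\mathfrak{T}$. Fix a $\sim^*$-class $S$ and a permutation $\pi$ of $[w]$, and let $\mathfrak{S}'=\mathfrak{S}_{\pi\mid S\cap V^{\mathfrak{S}}}$. By the definition of the $\mathfrak{S}_{\pi\mid-}$ operation, only the interpretations of $Q_1,\ldots,Q_w$ change; every other relation symbol, including each $P_{\sim_d,j}$, keeps the same interpretation in $\mathfrak{S}'$ as in $\mathfrak{S}$. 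In particular, the $\mathcal{L}$-reduct $\mathfrak{T}'$ of $\mathfrak{S}'$ is exactly $\mathfrak{T}_{\pi\mid S\cap V^{\mathfrak{T}}}$, which lies in $\mathcal{K}$ by the assumed symmetry of $\mathcal{K}$ within $\sim^*,V$.

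It then remains to recognize $\mathfrak{S}'$ as $(\mathfrak{T}')^{\eta_d}$ for the same labeling $\eta_d$. This is where the hypothesis that $\sim_d$ is defined by a formula not mentioning any $Q_i$ enters: since only the $Q_i$ were changed in passing from $\mathfrak{T}$ to $\mathfrak{T}'$, the equivalence relation $\sim_d$ has the exact same extension (hence the same equivalence classes and the same domain $V_{d,\mathfrak{T}}=V_{d,\mathfrak{T}'}$) in $\mathfrak{T}'$ as in $\mathfrak{T}$. Hence $\eta_d$ is still a partition labeling of $\mathfrak{T}'$, and the predicates $P_{\sim_d,j}^{\mathfrak{S}'}=P_{\sim_d,j}^{\mathfrak{S}}=\{\vec x\mid\eta_d(\vec x)=j\}$ are indeed the ones induced by $\eta_d$ on $\mathfrak{T}'$. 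Therefore $\mathfrak{S}'=(\mathfrak{T}')^{\eta_d}\in\mathcal{K}_{\sim_d}$, which is the desired symmetry.

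There is no real obstacle: the lemma is essentially a bookkeeping statement asserting that the two constructions (permuting $Q_i$'s within a $\sim^*$-class; naming $\sim_d$-classes with fresh predicates) commute, and the only place a genuine hypothesis is used is in guaranteeing that $\sim_d$-classes are invariant under the $Q_i$-permutation, which is immediate from the definability condition. The mild care required is to be precise that $V_{d,\mathfrak{T}}$ is unchanged as well, which again follows because the defining formula $\psi_d$ for the domain does not mention $Q_i$.
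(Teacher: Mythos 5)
Your proof is correct and follows essentially the same route as the paper's: pass to the $\mathcal{L}$-reduct, apply the symmetry of $\mathcal{K}$ to get $\mathfrak{T}_{\pi\mid S\cap V}\in\mathcal{K}$, and observe that the original partition labeling still labels the (unchanged) $\sim_d$-classes, so the permuted structure is again of the form $(\mathfrak{T}')^{\eta_d}\in\mathcal{K}_{\sim_d}$. If anything, you are slightly more explicit than the paper about where the hypothesis that $\sim_d$'s definition avoids the $Q_i$ is actually used.
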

\begin{proof}
  Consider some $\mathfrak{S}\in\mathcal{K}$, some equivalence class $S$ of $\sim^*$, and some permutation $\pi$ of $[w]$.  Let $\mathfrak{S}'$ be the reduct of $\mathfrak{S}$ to $\mathcal{L}$.  Then $\mathfrak{S}'\in\mathcal{K}$, so also $\mathfrak{S}'_{\pi\mid S\cap V}\in\mathcal{K}$.  $\mathfrak{S}$ induces a partition labeling $\eta$ on $\mathfrak{S}'$, and $\eta$ is also a partition labeling on $\mathfrak{S}'_{\pi\mid S\cap V}$, so $\mathfrak{S}'_{\pi\mid S\cap V}$ and $\eta$ induce an element $\mathfrak{S}_{\pi\mid S\cap V}\in\mathcal{K}$.
\end{proof}

\begin{lemma}
  $\mathcal{K}_{\sim_d}$ has $\odap$-DAP up to the strongly orderly sequence $\sim_1,\ldots,\sim_{d-1}$ and the equivalence relations $\sim_r$ for $r\leq d-1$ have definitions not involving any of the predicates $P_{\sim_d,i}$.
\end{lemma}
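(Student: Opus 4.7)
The plan is to reduce from the amalgamation problem in $\mathcal{K}_{\sim_d}$ to the amalgamation problem in $\mathcal{K}$.  Given an amalgamation plan $(\mathfrak{T}_j)_{j\leq n}\subseteq\mathcal{K}_{\sim_d}$ together with a coherent sequence of partition labelings $(\eta^r_j)_{r\leq d-1,\,j\leq n}$, I set $\mathfrak{S}_j:=\mathfrak{T}_j^-$; since each $\mathfrak{T}_j$ lies in $\mathcal{K}_{\sim_d}$, each reduct $\mathfrak{S}_j$ lies in $\mathcal{K}$, and $(\mathfrak{S}_j)_{j\leq n}$ is an amalgamation plan in $\mathcal{K}$.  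Because $\mathfrak{T}_j$ is of the form $\mathfrak{S}_j^{\eta'_j}$ for some partition labeling $\eta'_j$, the rule $\eta^d_j([\vec x]_{\sim_d})=i\iff\vec x\in P_{\sim_d,i}^{\mathfrak{T}_j}$ yields a well-defined injective partition labeling $\eta^d_j$ of $\mathfrak{S}_j$.

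Next I verify that $(\eta^d_j)_j$ is itself coherent: if $\vec x\in V_{d,\mathfrak{S}_j}\cap V_{d,\mathfrak{S}_{j'}}$ with $\rng\vec x\subseteq|\mathfrak{S}_j|\cap|\mathfrak{S}_{j'}|$, then $\mathfrak{T}_j$ and $\mathfrak{T}_{j'}$ agree on $\vec x$ by the amalgamation-plan condition, so $\vec x$ falls in the same predicate $P_{\sim_d,i}$ in both.  Consequently $(\eta^r_j)_{r\leq d,\,j\leq n}$ is coherent for the amalgamation plan $(\mathfrak{S}_j)$ in $\mathcal{K}$.  Applying $\odap$-DAP up to $\sim_1,\ldots,\sim_d$ in $\mathcal{K}$ produces an amalgam $\mathfrak{S}\in\mathcal{K}$ together with a partition labeling $\eta^d$ of $\sim_d$ on $\mathfrak{S}$ that extends every $\eta^d_j$.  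Setting $\mathfrak{T}:=\mathfrak{S}^{\eta^d}$ gives an element of $\mathcal{K}_{\sim_d}$; the identity $\eta^d\upharpoonright U_{d,\mathfrak{S}_j}=\eta^d_j$ forces $P_{\sim_d,i}^{\mathfrak{T}}\cap V_{d,\mathfrak{S}_j}=P_{\sim_d,i}^{\mathfrak{T}_j}$, hence $\mathfrak{T}|_{|\mathfrak{S}_j|}=\mathfrak{T}_j$, as required.

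The auxiliary claim that each $\sim_r$ with $r<d$ retains a definition avoiding the new predicates $P_{\sim_d,i}$ is immediate: $\sim_r$ is defined by symbols already in $\mathcal{L}$, and $\mathcal{L}\subseteq\mathcal{L}_{\sim_d,v}$.  Strong orderliness of the truncated sequence $\sim_1,\ldots,\sim_{d-1}$ in $\mathcal{K}_{\sim_d}$ transfers from its strong orderliness in $\mathcal{K}$ for essentially the same reason: each $\sim_r^*$ is explicit in $\sim_1,\ldots,\sim_{r-1}$ and so independent of $\sim_d$, and an automorphism of the Fra\"iss\'e limit of $\mathcal{K}_{\sim_d}$ witnessing free containment can be obtained from the corresponding automorphism of the Fra\"iss\'e limit of $\mathcal{K}$ by composing with a $\sim_d$-relabeling that restores the $P_{\sim_d,i}$'s.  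The main conceptual obstacle is the invocation of $\odap$-DAP up to $\sim_1,\ldots,\sim_d$ above: I rely on the reading (consistent with its usage in the proof of Theorem~\ref{thm:complex_amalgamation}) that the amalgam it delivers carries a partition labeling extending the given coherent one, rather than merely asserting existence of some amalgam in $\mathcal{K}$.
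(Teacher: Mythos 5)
Your proof is correct and follows essentially the same route as the paper's: pass to the $\mathcal{L}$-reducts, read the predicates $P_{\sim_d,i}$ as a $d$-th partition labeling $\eta^d_j$, check coherence, invoke $\odap$-DAP up to $\sim_1,\ldots,\sim_d$ in $\mathcal{K}$, and then re-expand the amalgam using a labeling extending $\bigcup_j\eta^d_j$. The subtlety you flag about needing the amalgam to be compatible with an extension of the given labelings is present in the paper's proof as well (it simply says ``taking any partition labeling $\eta^r$ on $\mathfrak{S}'$ extending $\bigcup_j\eta^r$''), so your reading matches the authors' intent.
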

\begin{proof}
It is easy to see that $\sim_1,\ldots,\sim_{d-1}$ remain a strongly orderly sequence defined by the same formulas as in $\mathcal{K}$.

  Let $\{\mathfrak{S}_j\}_{j\leq n}$ be an amalgamation plan with each $\mathfrak{S}_j\in\mathcal{K}_{\sim_d}$ and $(\eta^r_j)_{r<d,j}$ a coherent sequence of partition labelings.  Define structures $\mathfrak{S}'_j\in\mathcal{K}$ to be the $\mathcal{L}$-reduct of $\mathfrak{S}_j$, and define a partition labeling $\eta^d_j$ on $\mathfrak{S}'_j$ by $\eta^d_j(\vec x)=i$ if and only if $\vec x\in{P}_{\sim_d,i}^{\mathfrak{S}_j}$.  Then $\{\mathfrak{S}'_j\}_{j\leq n}$ is an amalgamation plan and $(\eta^r_j)_{r\leq d,j}$ is a coherent sequence of partition labelings, so there is an amalgam $\mathfrak{S}'\in\mathcal{K}$.  Taking any partition labeling $\eta^r$ on $\mathfrak{S}'$ extending $\bigcup_j\eta^r$, the pair $\mathfrak{S}',\eta^r$ induces a $\mathcal{L}_{\sim_d}$-structure $\mathfrak{S}$ by setting $P_{\sim_d,i}^{\mathfrak{S}}=\{\vec x\mid \eta^r(\vec x)=i\}$.  Then $\mathfrak{S}$ is the amalgam of $\{\mathfrak{S}_j\}_{j\leq n}$.
\end{proof}

Observe that if $\mathfrak{X}$ is $\mathcal{K}$-exchangeable, then it induces a natural $\mathcal{K}_{\sim_d}$-exchangeable structure $\mathfrak{X}_{\sim_d}$ by putting $\mathfrak{X}_{\sim_d}(\mathfrak{S})=\mathfrak{X}(\mathfrak{S}^-)$.

\begin{lemma}\label{thm:lift_fin}
  Suppose $\mathfrak{X}$ is a $\mathcal{K}$-exchangeable $\mathcal{L}'$-structure, with $\mathfrak{X}_{\sim_d}$ as defined above.  Then $\mathfrak{X}_{\sim_d}(\mathfrak{S})\equalinlaw\mathfrak{X}_{\sim_d}(\mathfrak{S}_{\pi|S})$  for any $\mathfrak{S}\in\mathcal{K}_{\sim_d}$, any $\pi:[v]\rightarrow[v]$, and any equivalence class $S$ of $\sim^*_d$.
\end{lemma}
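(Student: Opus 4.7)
The plan is to observe that the two sides of the claimed distributional equality are in fact constructed from literally the same random $\mathcal{L}'$-structure, so that equality in distribution is automatic and there is no real substantive content to be proved. The key point is that the operation $\mathfrak{S}\mapsto\mathfrak{S}_{\pi\mid S}$ alters only the interpretations of the new predicate symbols $P_{\sim_d,1},\ldots,P_{\sim_d,v}$, which are exactly the symbols that $\mathcal{L}_{\sim_d,v}$ adds to $\mathcal{L}$.

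First I would verify that $\mathfrak{S}_{\pi\mid S}$ actually lies in $\mathcal{K}_{\sim_d}$, so that $\mathfrak{X}_{\sim_d}(\mathfrak{S}_{\pi\mid S})$ is defined at all. This is exactly the content of the paragraph immediately following the definition of $\mathcal{K}_{\sim_d}$: since $\sim_d^*$ freely contains $\sim_d$ (from strong orderliness), $\mathcal{K}_{\sim_d}$ is symmetric in $\{P_{\sim_d,1},\ldots,P_{\sim_d,v}\}$ within $\sim_d^*,V_{d,\mathcal{K}_{\sim_d}}$, and by definition of that symmetry $\mathfrak{S}_{\pi\mid S}\in\mathcal{K}_{\sim_d}$ whenever $S$ is a $\sim_d^*$-class (note $S\cap V^{\mathfrak{S}}_{d}=S$ since $V_d$ is the common domain of $\sim_d$ and $\sim_d^*$).

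Next I would unfold the definition of $\mathfrak{X}_{\sim_d}$ on both sides: by construction $\mathfrak{X}_{\sim_d}(\mathfrak{S})=\mathfrak{X}(\mathfrak{S}^-)$ and $\mathfrak{X}_{\sim_d}(\mathfrak{S}_{\pi\mid S})=\mathfrak{X}((\mathfrak{S}_{\pi\mid S})^-)$. The $\mathcal{L}$-reduct discards precisely the predicate symbols on which $\mathfrak{S}$ and $\mathfrak{S}_{\pi\mid S}$ disagree, so $(\mathfrak{S}_{\pi\mid S})^{-}=\mathfrak{S}^{-}$ as $\mathcal{L}$-structures on the common universe $|\mathfrak{S}|$. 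Consequently
\[\mathfrak{X}_{\sim_d}(\mathfrak{S}_{\pi\mid S})=\mathfrak{X}((\mathfrak{S}_{\pi\mid S})^{-})=\mathfrak{X}(\mathfrak{S}^{-})=\mathfrak{X}_{\sim_d}(\mathfrak{S}),\]
whence in particular the two random $\mathcal{L}'$-structures are equal in distribution.

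There is no genuine obstacle here; the lemma is best viewed as a formality recording that the extra labels $P_{\sim_d,i}$ introduced by passing from $\mathcal{K}$ to $\mathcal{K}_{\sim_d}$ are invisible to $\mathfrak{X}$, so any symmetry of $\mathcal{K}_{\sim_d}$ under relabelings of those predicates is automatically inherited by $\mathfrak{X}_{\sim_d}$. The substantive work sits upstream, in the hypothesis that $\sim_d^*$ freely contains $\sim_d$: that is what makes symmetry in $\{P_{\sim_d,1},\ldots,P_{\sim_d,v}\}$ available to $\mathcal{K}_{\sim_d}$ in the first place, and it is also what will be needed later to arrange eq-symmetric representations for the induced structure.
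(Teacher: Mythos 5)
Your proof is correct: the paper omits the argument precisely because, as you observe, the operation $\mathfrak{S}\mapsto\mathfrak{S}_{\pi\mid S}$ touches only the added predicates $P_{\sim_d,i}$, so $(\mathfrak{S}_{\pi\mid S})^-=\mathfrak{S}^-$ and the two random structures $\mathfrak{X}_{\sim_d}(\mathfrak{S})=\mathfrak{X}(\mathfrak{S}^-)$ and $\mathfrak{X}_{\sim_d}(\mathfrak{S}_{\pi\mid S})=\mathfrak{X}((\mathfrak{S}_{\pi\mid S})^-)$ coincide outright, not merely in distribution. Your preliminary check that $\mathfrak{S}_{\pi\mid S}\in\mathcal{K}_{\sim_d}$ (via the symmetry of $\mathcal{K}_{\sim_d}$ in $\{P_{\sim_d,1},\ldots,P_{\sim_d,v}\}$ within $\sim_d^*$) is exactly the point the paper records in the paragraph preceding the lemma, so this is the intended argument.
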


\subsection{Infinitely Many Classes}

The case where $\#(\sim_d)$ is infinite must be handled differently than the case of finitely many classes.  We add new elements which represent the equivalence classes of $\sim_d$, and a unary predicate symbol $C^{\sim_d}$ which we interpret as the part of the structure naming equivalence classes.

Throughout this subsection, we assume that $\#(\sim_d)=\infty$, which implies $k_d=1$ by our convention.  We let $V$ be the unary symbol defining the domain of $\sim_d$.

Our plan is to define a new signature $\mathcal{L}_{\sim_d}$ and consider models $\mathfrak{S}$ in $\mathcal{L}_{\sim_d}$ whose domain is partitioned into three sets, $C^{\mathfrak{S}}$, $V^{\mathfrak{S}}$, $E^{\mathfrak{S}}$.  The corresponding $\mathcal{L}$-structure $\mathfrak{S}^-$ is given by \emph{pairs} of elements from $\mathfrak{S}$, which must either be of the form $(v,c)\in V^{\mathfrak{S}}\times C^{\mathfrak{S}}$ or $(e,e)$ for some $e\in E^{\mathfrak{S}}$.  The pairs $(v,c)$ represent elements of $V^{\mathfrak{S}^-}$ where we have disentangled the element $v$ from its equivalence class $c$.  The pairs $(e,e)$ represent elements of $E^{\mathfrak{S}}$ (doubled to preserve arities).

A typical predicate $Px_1\cdots x_n$ from $\mathcal{L}$ becomes a predicate $Px_1y_1\cdots x_ny_n$ in $\mathcal{L}_{\sim_d}$.  However, unary predicates have to remain unary (to preserve the property that $k_r=1$ for $r<d$ if $\#(\sim_r)=\infty$) and similarly some binary predicates (those defining $\sim_r$ with $k_r=1$) also have to remain binary.  To allow this, we have to decide, for each predicate $P$, whether it should be a property of $v$ or of $c$.

\begin{definition}
We define a signature $\mathcal{L}_{\sim_d}$ as follows:
    \begin{itemize}
    \item $\mathcal{L}_{\sim_d}$ contains a fresh unary predicate $C$ and
    \item for each $m$-ary relation symbol $P$ of $\mathcal{L}$
      \begin{itemize}
      \item if $P$ defines an equivalence relation $\sim_r$ with $k_r=1$ then $\mathcal{L}_{\sim_d}$ contains an binary relation symbol also denoted $P$,
      \item if $m=1$ then $\mathcal{L}_{\sim_d}$ contains a unary relation symbol also denoted $P$, and
      \item otherwise $\mathcal{L}_{\sim_d}$ contains a $2m$-ary relation symbol also denoted $P$.
      \end{itemize}
    \end{itemize}
\end{definition}

  If $P$ is a unary relation, we let $\sim_P$ be the equivalence relation $x\sim_P y$ if and only if $(Px\leftrightarrow Py)\wedge x\sim_d^*y$.  Since $\sim_P$ is clearly explicit in $\sim_1,\ldots,\sim_{d-1}$ and at least as fine as $\sim^*_d$, either $\sim_P$ is equal to $\sim_d^*$ or $\sim_P$ is orthogonal to $\sim_d$.

Similarly, if $P$ is a binary relation defining an equivalence relation $\sim_r$, $r<d$, with $k_r=1$ then either $\sim_r\cap\sim^*_d$ is equal to $\sim_d^*$ or orthogonal to $\sim_d$.  (Note that, by the usual set-theoretic definition of an equivalence relation, $\sim_r\cap\sim^*_d$ is precisely the common refinement of these equivalence relations.)

\begin{definition}\label{defn:meaningful}
  When $\mathfrak{S}$ is a $\mathcal{L}_{\sim_d}$-structure, we write $E^{\mathfrak{S}}=|\mathfrak{S}|\setminus(C^{\mathfrak{S}}\cup V^{\mathfrak{S}})$.

  We write $\mathcal{U}_n(\mathfrak{S})$ for those $2n$-tuples $\langle x_1,y_1,\ldots,x_n,y_n\rangle$ such that for each $i\leq n$ either $x_i\in V^{\mathfrak{S}}$ and $y_i\in C^{\mathfrak{S}}$ or $x_i=y_i\in E^{\mathfrak{S}}$.

  We say an $\mathcal{L}_{\sim_d}$-structure $\mathfrak{S}$ is \emph{meaningful} if:
  \begin{itemize}
  \item $C^{\mathfrak{S}}\cap V^{\mathfrak{S}}=\emptyset$,
  \item if $P$ is unary and $\sim_P$ is equal to $\sim_d^*$ then $P^{\mathfrak{S}}\cap V^{\mathfrak{S}}=\emptyset$,
  \item if $P$ is unary and $\sim_P$ is orthogonal to $\sim_d$ then $P^{\mathfrak{S}}\cap C^{\mathfrak{S}}=\emptyset$,
  \item if $P$ is binary, defines $\sim_r$, and $\sim_r\cap\sim^*_d$ is equal to $\sim_d^*$ then $P^{\mathfrak{S}}\subseteq (C^{\mathfrak{S}}\cup E^{\mathfrak{S}})\cup(C^{\mathfrak{S}}\cup E^{\mathfrak{S}})$,
  \item if $P$ is binary, defines $\sim_r$, and $\sim_r\cap\sim^*_d$ is orthogonal to $\sim_d$ then $P^{\mathfrak{S}}\subseteq (V^{\mathfrak{S}}\cup E^{\mathfrak{S}})\times (V^{\mathfrak{S}}\cup E^{\mathfrak{S}})$, and
  \item all other relation symbols have $P^{\mathfrak{S}}\subseteq \mathcal{U}_n(\mathfrak{S})$.
  \end{itemize}

  If $\mathfrak{S}$ is a meaningful $\mathcal{L}_{\sim_d}$-structure then we define $\mathfrak{S}^-$ by setting $|\mathfrak{S}^-|=(V^{\mathfrak{S}}\times C^{\mathfrak{S}})\cup E^{\mathfrak{S}}$.  We define projections
  \begin{itemize}
  \item $\pi_V:\mathfrak{S}^-\rightarrow\mathfrak{S}$ by $\pi_V(v,c)=v$ and $\pi_V(e)=e$,
  \item $\pi_C:\mathfrak{S}^-\rightarrow\mathfrak{S}$ by $\pi_C(v,c)=c$ and $\pi_C(e)=e$, and
  \item $\pi_U:\mathfrak{S}^-\rightarrow\mathcal{U}_1(\mathfrak{S})$ by $\pi_U(v,c)=(v,c)$ and $\pi_U(e)=(e,e)$.
  \end{itemize}
We then define relation symbols as follows:
  \begin{itemize}
  \item if $P$ is unary and $\sim_P$ is equal to $\sim_d^*$ then $P^{\mathfrak{S}^-}=\pi_C^{-1}(P^{\mathfrak{S}})$,
  \item if $P$ is unary and $\sim_P$ is orthogonal to $\sim_d$ then $P^{\mathfrak{S}^-}=\pi_V^{-1}(P^{\mathfrak{S}})$,
  \item if $P$ is binary, defines $\sim_r$, and $\sim_r\cap\sim^*_d$ is equal to $\sim_d^*$ then $P^{\mathfrak{S}^-}=\{(x,y)\mid (\pi_C(x),\pi_C(y))\in P^{\mathfrak{S}}\}$,
  \item if $P$ is binary, defines $\sim_r$, and $\sim_r\cap\sim^*_d$ is orthogonal to $\sim_d^*$ then $P^{\mathfrak{S}^-}=\{(x,y)\mid (\pi_V(x),\pi_V(y))\in P^{\mathfrak{S}}\}$, and
  \item otherwise $P^{\mathfrak{S}^-}=\{(x_1,\ldots,x_n)\mid (\pi_U(x_1),\ldots,\pi_U(x_n))\in P^{\mathfrak{S}}\}$.
  \end{itemize}
\end{definition}

Though tempting to define $\mathcal{K}_{\sim_d}$ to consist of precisely those meaningful $\mathfrak{S}$ such that $\mathfrak{S}^-\in\mathcal{K}$, this choice gives the wrong outcome if one of $V^{\mathfrak{S}}$ and $C^{\mathfrak{S}}$ were empty and the other were not.  Furthermore, since $\mathcal{K}_{\sim_d}$ must be closed under substructures, we must determine which such structures should be included.

\begin{definition}
  We say $\mathfrak{S}$ is \emph{large enough} if $|C^{\mathfrak{S}}|\geq 1$ and $|V^{\mathfrak{S}}|\geq 1$.  We define $\mathcal{K}_{\sim_d}$ consists of those $\mathfrak{S}$ such that there is some meaningful, large enough $\mathfrak{S}_0\supseteq\mathfrak{S}$ so that $\mathfrak{S}_0^-\in\mathcal{K}$.
\end{definition}

The definition of $\mathfrak{S}^-$ suggests why we include the requirement that $k_d=1$ whenever $\#(\sim_d)=\infty$, for otherwise the quotienting process would be much more complicated.
If, say, $k_d=2$, then we ought to turn pairs $(x,y)$ in $\mathcal{K}$ into triples $(x,y,z)$ in $\mathcal{K}(\sim_d)$, with each pair $(x,y)$ corresponding to many similar but non-equivalent from $\mathcal{K}$ with different equivalence classes; but then it is not clear how to assign meaning to the individual point $x$.

\begin{lemma}
  $\mathcal{K}_{\sim_d}$ is a Fra\"iss\'e class with $2$-DAP.
\end{lemma}

Our definition ensures that the equivalence relations carry over essentially unchanged.
\begin{lemma}
  $\sim_1,\ldots,\sim_{d-1}$ is a strongly orderly sequence in $\mathcal{K}_{\sim_d}$.
\end{lemma}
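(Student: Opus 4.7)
The plan is to transfer the orderly data $(\sim_r,\sim_r^*)$ for each $r<d$ from $\mathcal{K}$ to $\mathcal{K}_{\sim_d}$ via the correspondence $\mathfrak{S}\mapsto\mathfrak{S}^-$, and then reduce each clause of orderly, together with the strongly-orderly condition, to its counterpart in $\mathcal{K}$. Concretely, for $r<d$ every predicate symbol in the defining formula for $\sim_r$ has a canonical counterpart in $\mathcal{L}_{\sim_d}$: unary and length-one binary symbols stay as they are and are pulled back along $\pi_V$ or $\pi_C$ according to the case analysis in Definition \ref{defn:meaningful}, while higher-arity symbols become $2k$-ary and are evaluated on $\mathcal{U}_k(\mathfrak{S})$. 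The same applies to $\sim_r^*$. Under this translation, the domain and equivalence classes of $\sim_r$ and of $\sim_r^*$ in a meaningful $\mathfrak{S}$ are in canonical bijection with those in $\mathfrak{S}^-$.

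From this bijection the count $\#_{\sim_r^*}(\sim_r)$ is preserved, so the ``length one if infinite'' clause carries over verbatim, and explicitness of $\sim_r^*$ in $\sim_1,\ldots,\sim_{r-1}$ transfers because an explicit boolean combination of the original ingredients translates to an explicit boolean combination of the translated ingredients. For free containment, an automorphism of the Fra\"iss\'e limit of $\mathcal{K}$ that permutes a finite family of $\sim_r$-classes inside one $\sim_r^*$-class and fixes all others lifts to an automorphism of the Fra\"iss\'e limit of $\mathcal{K}_{\sim_d}$: on the $V$-part the lift is prescribed by the original action through the bijection with $\mathfrak{S}^-$, and on the $C$-part we take the induced permutation of the $\sim_d$-classes. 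For orthogonality, any equivalence relation on $\mathcal{K}_{\sim_d}$-structures that is explicit in $\sim_1,\ldots,\sim_{r-1}$ and finer than $\sim_r^*$ descends through the projections to an explicit finer equivalence relation in $\mathcal{K}$, which is orthogonal to $\sim_r$ by the assumed orderly property, and this orthogonality pulls back. The strongly-orderly clause is a purely set-theoretic condition on tuples of equal range and is immediate from the original version.

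I expect the bulk of the technical work to lie in the orthogonality step: one must confirm that the enlargement of the signature by the new predicate $C$ and by $2k$-ary versions of the old symbols does not produce a strictly richer family of explicit equivalence relations finer than $\sim_r^*$ than is already available in $\mathcal{K}$. This reduces to noting that $C$-membership merely distinguishes class-name elements from ordinary ones (and from $E$-elements) and is therefore invariant under automorphisms fixing the $V/C/E$-partition, while the $2k$-ary predicates are pullbacks of the original predicates along $\pi_U$. Once these routine verifications are in place, the five conditions follow, completing the proof.
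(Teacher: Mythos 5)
The paper does not actually prove this lemma: it is stated bare, prefaced only by the remark that ``our definition ensures that the equivalence relations carry over essentially unchanged.'' So there is no argument of record to compare yours against; what you have written is an attempt to supply the omitted verification, and your overall strategy --- transfer each pair $(\sim_r,\sim_r^*)$ along $\mathfrak{S}\mapsto\mathfrak{S}^-$ via the projections $\pi_V,\pi_C,\pi_U$ of Definition \ref{defn:meaningful}, match up classes and counts, and reduce each clause of Definition \ref{defn:strongly orderly} to its counterpart in $\mathcal{K}$ --- is surely what the authors intend.

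That said, your free-containment step has a genuine gap as written. You propose to take the automorphism of the Fra\"iss\'e limit $\mathfrak{M}$ of $\mathcal{K}$ supplied by free containment and lift it to the limit $\mathfrak{N}$ of $\mathcal{K}_{\sim_d}$ by reading off its action on the $V$-part and the $C$-part separately. But $\mathfrak{M}$ is identified with $\mathfrak{N}^-$, whose $V$-elements are \emph{pairs} $(v,c)\in V^{\mathfrak{N}}\times C^{\mathfrak{N}}$, and an arbitrary automorphism of $\mathfrak{N}^-$ need not act coordinatewise on such pairs. The image's $C$-coordinate is well defined (it is recovered from the $\sim_d$-class, which is preserved), but the $V$-coordinate is not: ``same $V$-coordinate'' is exactly the information the construction is designed to forget and is not in general $\mathcal{L}$-definable, so an automorphism may send $(v,c)$ and $(v,c')$ to pairs with different first coordinates, and your prescription of the lift on the $V$-part is not well defined. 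To repair this you must either show the witnessing automorphism can be \emph{chosen} in product form, or (more robustly) build the required automorphism of $\mathfrak{N}$ directly by back-and-forth, using ultrahomogeneity together with Theorem \ref{thm:complex_amalgamation} and invoking free containment in $\mathcal{K}$ only to certify that each finite partial map permuting $D_1,\ldots,D_v$ and fixing the remaining classes is a partial isomorphism. A smaller issue of the same flavor: for $\sim_r$ of length $k_r\geq 2$ the transferred relation lives on $2k_r$-tuples, and the strongly-orderly clause then quantifies over pairs of $2k_r$-tuples of equal range, including re-pairings such as $(x_1,y_2,x_2,y_1)$ that do not arise by permuting a $k_r$-tuple of $\mathfrak{S}^-$; this is not ``immediate from the original version'' and needs an explicit appeal to the domain predicate.
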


\begin{lemma}
$\mathcal{K}_{\sim_d}$ has $\odap$-DAP up to $\sim_1,\ldots,\sim_{d-1}$.
\end{lemma}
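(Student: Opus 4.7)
The plan is to reduce the desired $\odap$-DAP of $\mathcal{K}_{\sim_d}$ up to $\sim_1,\ldots,\sim_{d-1}$ to the already available $\odap$-DAP of $\mathcal{K}$ up to the full sequence $\sim_1,\ldots,\sim_d$, shuttling between the two signatures using the operation $(-)^-$ of Definition \ref{defn:meaningful}. Given an amalgamation plan $(\mathfrak{S}_j)_{j\leq n}$ in $\mathcal{K}_{\sim_d}$ with coherent partition labeling $(\eta^r_j)_{r<d,\,j}$, the cases $n\leq 2$ follow from the $2$-DAP established in the previous lemma, so I assume $n\geq 3$. Because each $\mathfrak{S}_j\in\mathcal{K}_{\sim_d}$ sits inside some large-enough meaningful $\mathfrak{T}_j$ with $\mathfrak{T}_j^-\in\mathcal{K}$, and because $(-)^-$ commutes with taking substructures, each $\mathfrak{S}_j^-\in\mathcal{K}$. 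Since the $\mathfrak{S}_j$ agree on pairwise intersections and $(-)^-$ is computed pointwise from the $V,C,E$ partition and the predicate interpretations, the $\mathfrak{S}_j^-$ also agree on pairwise intersections inside the ambient universe $(\bigcup_j V^{\mathfrak{S}_j})\times(\bigcup_j C^{\mathfrak{S}_j})\cup\bigcup_j E^{\mathfrak{S}_j}$, which is the hypothesis needed for Theorem \ref{thm:complex_amalgamation}.

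Next I construct a coherent partition labeling $(\hat\eta^r_j)_{r\leq d,\,j}$ of the $\mathfrak{S}_j^-$ for the full sequence $\sim_1,\ldots,\sim_d$. For each $r<d$, the label $\eta^r_j$ lifts along $\pi_V$ or $\pi_C$ as prescribed by the case analysis in Definition \ref{defn:meaningful} for the domain predicate $V_r$; coherence is preserved because the lift is pointwise in the $\mathfrak{S}_j$ direction. For $r=d$, I set $\hat\eta^d_j((v,c))=c$: the $\sim_d$-classes in $\mathfrak{S}_j^-$ meeting $V^{\mathfrak{S}_j}\times C^{\mathfrak{S}_j}$ are precisely the fibers of $\pi_C$, so this is well defined on classes, injective, and coherent because any class $[(v,c)]_{\sim_d}$ appearing in two different $\mathfrak{S}_j^-$'s is labeled by the same shared $C$-element. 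The use of $C$-elements as labels is legitimate because $\#(\sim_d)=\infty$, in which case the range of such a labeling may be any countable set. Theorem \ref{thm:complex_amalgamation} applied in $\mathcal{K}$ then produces $\mathfrak{T}\in\mathcal{K}$ with universe $\bigcup_j|\mathfrak{S}_j^-|$ such that $\mathfrak{T}|_{|\mathfrak{S}_j^-|}=\mathfrak{S}_j^-$ for each $j$.

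Finally, I invert $(-)^-$. Define an $\mathcal{L}_{\sim_d}$-structure $\mathfrak{S}$ with $|\mathfrak{S}|=[n]$, taking $V^{\mathfrak{S}}=\bigcup_jV^{\mathfrak{S}_j}$, $C^{\mathfrak{S}}=\bigcup_jC^{\mathfrak{S}_j}$, $E^{\mathfrak{S}}=\bigcup_jE^{\mathfrak{S}_j}$ (a partition of $[n]$ by intersection agreement), and interpret each predicate of $\mathcal{L}_{\sim_d}$ by reversing the case-by-case formulas in Definition \ref{defn:meaningful} applied to $\mathfrak{T}$. A short counting check valid for $n\geq 3$ shows $|\mathfrak{T}|$ is exactly $V^{\mathfrak{S}}\times C^{\mathfrak{S}}\cup E^{\mathfrak{S}}=|\mathfrak{S}^-|$: any pair $(v,c)$ with $v\in V^{\mathfrak{S}}$, $c\in C^{\mathfrak{S}}$ lies in $\mathfrak{S}_i^-$ for any $i\notin\{v,c\}$, and such an $i$ exists. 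Hence $\mathfrak{S}^-=\mathfrak{T}\in\mathcal{K}$, so $\mathfrak{S}\in\mathcal{K}_{\sim_d}$ (enlarging $\mathfrak{S}$ inside a meaningful superstructure via Theorem \ref{thm:complex_amalgamation} if it is not already large enough), and $\mathfrak{S}|_{[n]\setminus\{j\}}=\mathfrak{S}_j$ by construction. The main obstacle is the bookkeeping around the inversion of $(-)^-$ and checking the coherence of the $\sim_d$-labeling; each piece is handled by walking through the cases of Definition \ref{defn:meaningful}, where the strongly orderly hypothesis of Definition \ref{defn:strongly orderly} is what keeps the domain side of each predicate cleanly determined.
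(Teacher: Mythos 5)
Your overall strategy is the same as the paper's: push the amalgamation plan down to $\mathcal{K}$ via $(-)^-$, extend the coherent labeling to the full sequence by setting $\eta^d(v,c)=c$ (legitimate since $\#(\sim_d)=\infty$), amalgamate there using Theorem \ref{thm:complex_amalgamation} and the hypothesis that $\mathcal{K}$ has $\odap$-DAP up to $\sim_1,\ldots,\sim_d$, and pull back. Where you genuinely diverge is in handling the coverage of the universe $(V^{\mathfrak{S}}\times C^{\mathfrak{S}})\cup E^{\mathfrak{S}}$. The paper proves the stronger, Theorem~\ref{thm:complex_amalgamation}-style statement directly (because it first enlarges each $\mathfrak{S}_i$ to be large enough, which destroys the literal amalgamation-plan shape), and consequently must manufacture the missing pairs $(v,c)$ by hand as one-point structures; showing those one-point structures exist is the one place the proof uses orthogonality (the $\sim_e$ argument). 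Your counting observation --- that in a literal plan of size $n\geq 3$ every pair $(v,c)$ already lies in $\mathfrak{S}_i^-$ for any $i\notin\{v,c\}$ --- sidesteps that entire construction, and proving only the literal $n$-DAP version is enough, since Theorem \ref{thm:complex_amalgamation} then upgrades it. That is a real simplification where it applies.

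The gap is in the degenerate case that the paper's enlargement step exists to handle: you never arrange for the $\mathfrak{S}_j$ to be large enough. If $C^{\mathfrak{S}}=\emptyset$ (or $V^{\mathfrak{S}}=\emptyset$), then every $|\mathfrak{S}_j^-|$ reduces to $E^{\mathfrak{S}_j}$, so the reducts discard all predicates on the $V$- (resp.\ $C$-) elements; the amalgam $\mathfrak{T}$ then contains no data from which to ``reverse the case-by-case formulas'' on those elements, and your closing remark that one can enlarge $\mathfrak{S}$ ``inside a meaningful superstructure via Theorem \ref{thm:complex_amalgamation} if it is not already large enough'' is itself an amalgamation claim in $\mathcal{K}_{\sim_d}$ --- exactly what is being proved --- so it cannot be invoked there. (When both $V^{\mathfrak{S}}$ and $C^{\mathfrak{S}}$ are nonempty your argument survives even if individual $\mathfrak{S}_j$ fail to be large enough, since the lost data is recoverable from some other $\mathfrak{S}_i^-$ by intersection agreement; the genuinely problematic case is the empty one.) To close the gap you either need the paper's device --- enlarge first, then add the missing cross pairs as one-point structures using orthogonality --- or a separate direct argument for plans with $V^{\mathfrak{S}}=\emptyset$ or $C^{\mathfrak{S}}=\emptyset$.
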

\begin{proof}
It is convenient to directly prove the stronger version of amalgamation from Theorem \ref{thm:complex_amalgamation}.  Let $\{\mathfrak{S}_i\}_{i\leq n}\subseteq\mathcal{K}_{\sim_d}$ and a coherent partition labeling $(\eta^r_i)_{r<d}$ be given so that $\mathfrak{S}_i|_S$ is identical to $\mathfrak{S}_{i'}|_S$ whenever $S\subseteq|\mathfrak{S}_i|\cap|\mathfrak{S}_{i'}|$.  Enlarging models if necessary, assume that each $\mathfrak{S}_i$ is large enough. (These expansions need not overlap in the way required by the statement of $n$-DAP, which is why we prove the stronger version.)

Each $\mathfrak{S}_i$ has a corresponding $\mathfrak{S}_i^-\in\mathcal{K}$.  Observe that $S\subseteq ((V^{\mathfrak{S}_i}\times C^{\mathfrak{S}_i})\cup E^{\mathfrak{S}_i})\cap ((V^{\mathfrak{S}_{i'}}\times C^{\mathfrak{S}_{i'}})\cup E^{\mathfrak{S}_{i'}})$ whenever $S\subseteq|\mathfrak{S}^-_i|\cap|\mathfrak{S}^-_{i'}|$ and, therefore, $\mathfrak{S}^-_i|_S$ is identical to $\mathfrak{S}^-_{i'}|_S$.  We can also define a corresponding partition labeling $\eta^{r,-}_i$: for $r<d$, this is clear from the definition of $\mathfrak{S}^-$, and we set $\eta^d_i(v,c)=c$ for each $\mathfrak{S}^-_i$.

Amalgamating these structures may not be enough, however, because there might be pairs $(v,c)$ where $v\in V^{\mathfrak{S}_i}$ and $c\in (C)^{\mathfrak{S}_{i'}}$, but the corresponding pair $(v,c)$ is not contained in $\bigcup_i|\mathfrak{S}_i^-|$.  We add such elements manually: for each such pair we include a one point structure $\mathfrak{S}^-_{v,c}$ with $|\mathfrak{S}_{v,c}^-|=\{(v,c)\}$ and relations:
\begin{itemize}
\item if $P$ is unary and $\sim_P$ is $\sim^*_d$ then $(v,c)\in P^{\mathfrak{S}^-_{v,c}}$ if and only if $c\in P^{\mathfrak{S}_{i'}}$, and
\item if $P$ is unary and $\sim_P$ is orthogonal to $\sim_d$ then $(v,c)\in P^{\mathfrak{S}^-_{v,c}}$ if and only if $v\in P^{\mathfrak{S}_i}$.
\end{itemize}
Observe that some structure with these properties exists: consider the equivalence relation $\sim_{e}$ definable in $\mathcal{K}$ so that $x\sim_e y$ if $x\sim^*_d y$ and for each $P$ such that $\sim_P$ is orthogonal to $\sim_d$, $x\in P\leftrightarrow y\in P$.  This equivalence relation must be orthogonal to $\sim_d$, and so we choose any element $(w,c)\in P^{\mathfrak{S}^-_{i'}}$ along with an extension $\mathfrak{S}^{*}_{i'}$ so that there is some element in $\mathfrak{S}^{*}_{i'}$ in the same $\sim_d$ equivalence class as $(v,c)$ and in the same $\sim_e$ equivalence class as $(v,c)$.  Then we may take $\mathfrak{S}_{v,c}^-=\mathfrak{S}^{*}_{i'}|_{\{(w,c)\}}$.  We then set $\eta^d_{v,c}(v,c)=c$ and choose the other $\eta^r_{v,c}(v,c)$ equal to $\eta^r_i(v,c')$ or $\eta^r_{i'}(v',c)$ respectively, depending on whether $\sim_r\cap\sim^*_d$ is equal to $\sim^*_d$.

By Theorem \ref{thm:complex_amalgamation}, we can amalgamate these structures to get $\mathfrak{S}^-$, which induces an amalgam $\mathfrak{S}$.
\end{proof}

\begin{lemma}\label{thm:inf_embedding}
  Let $\mathfrak{S}$ be an $\mathcal{L}$-structure.  Then there is an $\mathcal{L}_{\sim_d}$-structure $\mathfrak{S}_{\sim_d}$ such that $\mathfrak{S}$ is isomorphic to a substructure of $\mathfrak{S}^-_{\sim_d}$.
\end{lemma}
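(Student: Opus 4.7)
The plan is to produce $\mathfrak{S}_{\sim_d}$ by explicitly splitting each element of $V^{\mathfrak{S}}$ into a ``point'' part in $V^{\mathfrak{S}_{\sim_d}}$ and an ``equivalence class'' part in $C^{\mathfrak{S}_{\sim_d}}$, then placing each relation of $\mathcal{L}$ into whichever of these two parts the setup of Definition \ref{defn:meaningful} requires.  Concretely, fix disjoint copies $V^{\mathfrak{S}_{\sim_d}}=\{\alpha(x)\mid x\in V^{\mathfrak{S}}\}$ and $E^{\mathfrak{S}_{\sim_d}}=\{\beta(x)\mid x\in|\mathfrak{S}|\setminus V^{\mathfrak{S}}\}$ via bijections $\alpha,\beta$, and let $C^{\mathfrak{S}_{\sim_d}}=\{c_K\mid K\in V^{\mathfrak{S}}/{\sim_d}\}$ contain exactly one fresh representative for each $\sim_d$-class of $\mathfrak{S}$.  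The candidate embedding is $\iota:\mathfrak{S}\to\mathfrak{S}^-_{\sim_d}$ given by $\iota(x)=(\alpha(x),c_{[x]_{\sim_d}})$ for $x\in V^{\mathfrak{S}}$ and $\iota(x)=\beta(x)$ for $x\notin V^{\mathfrak{S}}$.  Since $\alpha,\beta$ are injections and the two cases land in disjoint parts of $|\mathfrak{S}^-_{\sim_d}|$, $\iota$ is automatically injective.

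Next, I define the interpretations in $\mathfrak{S}_{\sim_d}$ by running the five-case split of Definition \ref{defn:meaningful} in reverse, so that pulling back via the projections $\pi_V,\pi_C,\pi_U$ reproduces the original structure on $\iota(|\mathfrak{S}|)$.  For a unary $P$ with $\sim_P=\sim_d^*$, set $P^{\mathfrak{S}_{\sim_d}}=\{c_K\mid K\subseteq P^{\mathfrak{S}}\}\cup\{\beta(x)\mid x\in P^{\mathfrak{S}}\setminus V^{\mathfrak{S}}\}$; for a unary $P$ with $\sim_P$ orthogonal to $\sim_d$, set $P^{\mathfrak{S}_{\sim_d}}=\{\alpha(x)\mid x\in P^{\mathfrak{S}}\cap V^{\mathfrak{S}}\}\cup\{\beta(x)\mid x\in P^{\mathfrak{S}}\setminus V^{\mathfrak{S}}\}$; for binary $P$ defining $\sim_r$ with $\sim_r\cap\sim_d^*=\sim_d^*$ or orthogonal to $\sim_d$, do the same using $C$-coordinates or $V$-coordinates respectively; and for any other $m$-ary $P$, define $P^{\mathfrak{S}_{\sim_d}}$ to be the set of $2m$-tuples obtained from tuples in $P^{\mathfrak{S}}$ by replacing each coordinate $x\in V^{\mathfrak{S}}$ by the pair $\alpha(x),c_{[x]_{\sim_d}}$ and each coordinate $x\notin V^{\mathfrak{S}}$ by the pair $\beta(x),\beta(x)$.

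The key verification is well-definedness in the ``special'' cases.  For a unary $P$ with $\sim_P=\sim_d^*$, the defining condition $\sim_P=\sim_d^*$ means that the truth value of $P$ is constant on each $\sim_d^*$-class, hence also on each $\sim_d$-class, so the prescription ``$c_K\in P^{\mathfrak{S}_{\sim_d}}$ iff $K\subseteq P^{\mathfrak{S}}$'' is unambiguous.  The analogous argument handles binary $P$ defining some $\sim_r$ with $\sim_r\cap\sim_d^*=\sim_d^*$.  Meaningfulness of $\mathfrak{S}_{\sim_d}$ is then immediate from the construction, since each relation has been placed exactly into the region of the universe allowed by Definition \ref{defn:meaningful} and $C^{\mathfrak{S}_{\sim_d}},V^{\mathfrak{S}_{\sim_d}},E^{\mathfrak{S}_{\sim_d}}$ are pairwise disjoint by choice of fresh copies.

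Finally, to conclude, I compare the definition of $\mathfrak{S}^-_{\sim_d}$ (via the projections $\pi_V,\pi_C,\pi_U$) case by case with the construction above; in each case the fibre of a tuple of $\iota$-images under the corresponding projection belongs to the relevant relation on $\mathfrak{S}_{\sim_d}$ iff the original tuple belonged to $P^{\mathfrak{S}}$.  Thus $\iota$ is an isomorphism of $\mathfrak{S}$ onto $\mathfrak{S}^-_{\sim_d}|_{\iota(|\mathfrak{S}|)}$, completing the proof.  The main obstacle is really just this well-definedness point: every interpretation on $C^{\mathfrak{S}_{\sim_d}}$ has to be determined by the $\sim_d$-class alone, and this is precisely what the hypotheses $\sim_P=\sim_d^*$ and $\sim_r\cap\sim_d^*=\sim_d^*$ provide; no further use of $\odap$-DAP, Fra\"iss\'e-theoretic amalgamation, or strong orderliness is required.
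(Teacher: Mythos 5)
Your construction reproduces, essentially verbatim, the second half of the paper's proof: take $|\mathfrak{S}_{\sim_d}|=|\mathfrak{S}|\cup U_{d,\mathfrak{S}}$ (your $c_K$'s are the classes in $U_{d,\mathfrak{S}}$), push each relation into the $V$-, $C$-, or $E$-part according to the case split of Definition \ref{defn:meaningful}, and check that the projections $\pi_V,\pi_C,\pi_U$ pull everything back correctly on the diagonal image $\iota(|\mathfrak{S}|)$. That part is fine, and your well-definedness observation (that $\sim_P=\sim_d^*$, resp.\ $\sim_r\cap\sim_d^*=\sim_d^*$, forces the relation to be constant on $\sim_d$-classes) is the same one the paper relies on.

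The gap is your closing claim that no amalgamation is needed. By definition, $|\mathfrak{S}^-_{\sim_d}|=(V^{\mathfrak{S}_{\sim_d}}\times C^{\mathfrak{S}_{\sim_d}})\cup E^{\mathfrak{S}_{\sim_d}}$ is the \emph{full} product, so $\mathfrak{S}^-_{\sim_d}$ necessarily contains all the off-diagonal pairs $(v,c)$ with $c\neq[v]_{\sim_d}$, not just the image of $\iota$. For the lemma to do its job in Theorem \ref{thm:lift_infinite} --- where $\mathfrak{X}$ (defined only on $\mathcal{K}$) is evaluated at $\mathfrak{S}^-_{\sim_d}$ and $\mathfrak{X}_{\mathrm{dbl}}$, $\mathfrak{X}^{\{f_n\}}$ (defined only on $\mathcal{K}_{\sim_d}$) are evaluated at $\mathfrak{S}_{\sim_d}$ --- one needs $\mathfrak{S}_{\sim_d}\in\mathcal{K}_{\sim_d}$, i.e.\ $\mathfrak{S}^-_{\sim_d}$ must lie in $\mathcal{K}$. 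That is exactly why the paper's proof begins with the two ``without loss of generality'' expansion steps: using amalgamation in $\mathcal{K}$ and the orthogonality clauses of strong orderliness, it enlarges $\mathfrak{S}$ so that for every $v\in V^{\mathfrak{S}}$ and every $\sim_d$-class $c$ there is an actual element $v'$ realizing the ``phantom'' pair $(v,c)$ (same orthogonal data as $v$, but lying in class $c$). Your construction leaves these off-diagonal pairs with no witness in $\mathcal{K}$, so while it proves the statement read literally (``some $\mathcal{L}_{\sim_d}$-structure''), it omits the step that carries all the mathematical content and that the subsequent theorem actually uses. You should either add the realizability argument for the off-diagonal pairs or note explicitly that membership of $\mathfrak{S}_{\sim_d}$ in $\mathcal{K}_{\sim_d}$ still has to be established.
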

\begin{proof}
Without loss of generality, we may expand the structure $\mathfrak{S}$.  First, we may assume that for all $v,w\in V^{\mathfrak{S}}$ and all $\sim_r$ such that $\sim_r\cap\sim^*_d$ is orthogonal to $\sim_d$ there is a $u$ with $u\sim_r v$, $u\not\sim_d v$, and $u\not\sim_d w$.

Next, for any $v,w\in V^{\mathfrak{S}}$ such that $v\sim^*_dw$ but $v\not\sim_d w$, we wish to ensure that there is a $v'$ so that $v'\sim_d w$ which has roughly the same type as $v$.  We let $\mathfrak{S}_{-w}$ be the substructure of $\mathfrak{S}$ that does not contain any element in the same $\sim_d$ equivalence class as $w$.  We choose a structure $\mathfrak{S}_{v'}$ which contains $w$ and contains an element $v'$ which has the property that for each unary relation $P$ with $\sim_P$ orthogonal to $\sim_d$, $v'\in P^{\mathfrak{S}_{v'}}$ if and only if $v\in P^{\mathfrak{S}}$.  
We then amalgamate these structures to obtain $\mathfrak{S}'$ with a single new element $v'$ so that $v'\sim_d w$ and $v$ and $v'$ are identical as elements of $\mathfrak{S}_{-w}$.

  We take $|\mathfrak{S}_{\sim_d}|=|\mathfrak{S}|\cup U_{d,\mathfrak{S}}$.  Naturally, we set $C^{\mathfrak{S}_{\sim_d}}=U_{d,\mathfrak{S}}$, which is sufficient to define the universe of $\mathfrak{S}^-_{\sim_d}$ and the expected embedding $\pi:\mathfrak{S}\rightarrow\mathfrak{S}^-_{\sim_d}$ given by mapping $e\in |\mathfrak{S}|\setminus E^{\mathfrak{S}}$ to itself and $v\in V^{\mathfrak{S}}$ to the pair $(v,c)$ where $c\in U_{d,\mathfrak{S}}$ is the $\sim_d$ equivalence class of $v$.  In particular, note that the function $\pi_C\circ \pi$ maps an element of $\mathfrak{S}$ to its equivalence class in $\mathfrak{S}_{\sim_d}$.

For each unary relation $P$ of $\mathcal{L}$ where $\sim_P$ is equal to $\sim^*_d$, $P^{\mathfrak{S}_{\sim_d}}$ consists of those $a$ such that $(\pi_C\circ \pi)^{-1}(a)\subseteq P^{\mathfrak{S}}$.  For each unary relation $P$ of $\mathcal{L}$ where $\sim_P$ is orthogonal to $\sim_d$, we set $P^{\mathfrak{S}_{\sim_d}}=P^{\mathfrak{S}}$.  Similarly, for each binary $P$ defining a relation $\sim_r$ with $\sim_r\cap \sim^*_d$ equal to $\sim^*_d$, $P^{\mathfrak{S}_{\sim_d}}$ consist of those pairs $(a,b)$ such that $(a,b)\in P^{\mathfrak{S}}$ for some (equivalently, every) $(a',b')$ with $\pi_C\circ\pi(a')=a$ and $\pi_C\circ\pi(b')=b$.  For each binary relation $P$ defining a relation $\sim_r$ with $\sim_r\cap\sim^*_d$ orthogonal to $\sim_d$, $P^{\mathfrak{S}_{\sim_d}}=P^{\mathfrak{S}}$.  
For all other relation symbols $P$ we map a tuple $(x_1,y_1,\ldots,x_n,y_n)$ from $\mathfrak{S}_{\sim_d}$ to a corresponding tuple $(x'_1,\ldots,x'_n)$ and set $(x_1,y_1,\ldots,x_n,y_n)\in P^{\mathfrak{S}_{\sim_d}}$ if and only if $(x'_1,\ldots,x'_n)\in P^{\mathfrak{S}}$.

Our claim then follows from the definitions that $\mathfrak{S}\subseteq \mathfrak{S}^-_{\sim_d}$.
\end{proof}

\begin{definition}
  If $\mathfrak{X}$ is a $\mathcal{K}$-exchangeable $\mathcal{L}'$-structure, we define a signature $\mathcal{L}'_{\mathrm{dbl}}$ where, for each $m$-ary relation symbol of $\mathcal{L}'$, $\mathcal{L}'_{\mathrm{dbl}}$ contains a $2m$-ary  symbol.  

Given a $\mathcal{L}'$-structure $\mathfrak{T}$, we define a $\mathcal{L}'_{\mathrm{dbl}}$-structure $\mathfrak{T}^{\mathrm{dbl}}$ to be the structure where
\[(x_1,\ldots,x_{2m})\in P^{\mathfrak{T}^{\mathrm{dbl}}}\]
iff $(\pi_U^{-1}(x_1,x_2),\ldots,\pi_U^{-1}(x_{2m-1},x_{2m}))\in P^{\mathfrak{T}}$, where $\pi_U$ is defined in Definition \ref{defn:meaningful}.

We define a random $\mathcal{L}'_{\mathrm{dbl}}$-structure $\mathfrak{X}_{\mathrm{dbl}}$ by setting
\[\mathfrak{X}_{\mathrm{dbl}}(\mathfrak{S})=(\mathfrak{X}(\mathfrak{S}^-))^{\mathrm{dbl}}.\]

Conversely, given a $\mathcal{L}'_{\mathrm{dbl}}$-structure $\mathfrak{T}$ with the same universe as a $\mathcal{L}_{\mathrm{dbl}}$-structure $\mathfrak{S}$, we define a structure $\mathfrak{T}^-$ with the same universe as $\mathfrak{S}^-$ by setting $(x_1,\ldots,x_m)\in P^{\mathfrak{T}^-}$ if and only if $(\pi_U(x_1),\ldots,\pi_U(x_m))\in P^{\mathfrak{T}}$.
\end{definition}

\begin{lemma} With $\mathfrak{X}_{\mathrm{dbl}}$ and everything else as defined above, we have
  \[\mathbb{P}(\mathfrak{X}_{\mathrm{dbl}}(\mathfrak{S})=\mathfrak{T})=\mathbb{P}(\mathfrak{X}(\mathfrak{S}^-)=\mathfrak{T}^-).\]
\end{lemma}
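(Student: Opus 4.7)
The plan is to show that this equality is just an unpacking of the definitions: the operation $\mathfrak{U}\mapsto\mathfrak{U}^{\mathrm{dbl}}$ and the operation $\mathfrak{T}\mapsto\mathfrak{T}^-$ are mutually inverse bijections between $\mathcal{L}'$-structures on $|\mathfrak{S}^-|$ and $\mathcal{L}'_{\mathrm{dbl}}$-structures on $|\mathfrak{S}|$ (restricted to the meaningful tuples encoded by $\pi_U$), so the two events in question coincide almost surely.

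The first step is to verify the key bijection. Since $\pi_U:|\mathfrak{S}^-|\to\mathcal{U}_1(\mathfrak{S})$ is a bijection (by its definition, mapping $(v,c)$ to $(v,c)$ and $e$ to $(e,e)$), every tuple $(y_1,\ldots,y_m)\in|\mathfrak{S}^-|^m$ corresponds to exactly one tuple $(\pi_U(y_1),\ldots,\pi_U(y_m))$, which, flattening pairs, is a $2m$-tuple in $\mathcal{U}_m(\mathfrak{S})\subseteq|\mathfrak{S}|^{2m}$. By comparing the defining conditions, for any $\mathcal{L}'$-structure $\mathfrak{U}$ on $|\mathfrak{S}^-|$ and any relation symbol $P$ of arity $m$, we have
\[(y_1,\ldots,y_m)\in P^{\mathfrak{U}}\iff(\pi_U(y_1),\ldots,\pi_U(y_m))\in P^{\mathfrak{U}^{\mathrm{dbl}}}\iff(y_1,\ldots,y_m)\in P^{(\mathfrak{U}^{\mathrm{dbl}})^-},\]
so $(\mathfrak{U}^{\mathrm{dbl}})^-=\mathfrak{U}$. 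Symmetrically, for any $\mathcal{L}'_{\mathrm{dbl}}$-structure $\mathfrak{T}$ on $|\mathfrak{S}|$ concentrated on $\mathcal{U}_m(\mathfrak{S})$, one checks $(\mathfrak{T}^-)^{\mathrm{dbl}}=\mathfrak{T}$.

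The second step is to apply this bijection at the level of events. Setting $\mathfrak{U}=\mathfrak{X}(\mathfrak{S}^-)$, we have
\[\{\mathfrak{X}_{\mathrm{dbl}}(\mathfrak{S})=\mathfrak{T}\}=\{(\mathfrak{X}(\mathfrak{S}^-))^{\mathrm{dbl}}=\mathfrak{T}\}=\{\mathfrak{X}(\mathfrak{S}^-)=\mathfrak{T}^-\}\]
by the definition of $\mathfrak{X}_{\mathrm{dbl}}$ and the bijection just established. Taking probabilities on both sides yields the claimed identity.

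The only real bookkeeping concern is verifying that $\mathfrak{T}^-$ really is an $\mathcal{L}'$-structure on $|\mathfrak{S}^-|$ and that $\mathfrak{T}$ really arises as $\mathfrak{U}^{\mathrm{dbl}}$ for some such $\mathfrak{U}$. If $\mathfrak{T}$ is not of the form $\mathfrak{U}^{\mathrm{dbl}}$ (for example, if $\mathfrak{T}$ contains a tuple not lying in $\mathcal{U}_m(\mathfrak{S})$), then both sides of the claimed identity are zero: the right side is automatically zero since $\mathfrak{T}^-$ recovers $\mathfrak{T}$ only on meaningful tuples, and $\mathfrak{X}_{\mathrm{dbl}}(\mathfrak{S})$ is supported on such structures by construction, making the left side zero as well. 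So the main obstacle is simply being careful about the support of the relevant random structures; once this is handled, the identity is an immediate consequence of the definitions of $(\cdot)^{\mathrm{dbl}}$ and $(\cdot)^-$.
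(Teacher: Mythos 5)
Your proof is correct and follows essentially the same route as the paper, whose entire argument is the observation that $(\mathfrak{T}^{\mathrm{dbl}})^-=\mathfrak{T}$; your unpacking of the bijection induced by $\pi_U$ and its application at the level of events is exactly the intended argument. One small caveat: in the degenerate case where $\mathfrak{T}$ is not of the form $\mathfrak{U}^{\mathrm{dbl}}$, only the left-hand side is forced to vanish --- $\mathbb{P}(\mathfrak{X}(\mathfrak{S}^-)=\mathfrak{T}^-)$ can still be positive, since $\mathfrak{T}^-$ remains a legitimate $\mathcal{L}'$-structure that merely forgets the offending tuples --- so the identity should be read as quantified over $\mathfrak{T}$ in the image of $(\cdot)^{\mathrm{dbl}}$, which is what the paper implicitly intends.
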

\begin{proof}
Immediate from the observation that $(\mathfrak{T}^{\mathrm{dbl}})^-=\mathrm{T}$.
\end{proof}

\begin{lemma}\label{thm:lift_infin}
$\mathfrak{X}_{\mathrm{dbl}}$ is $\mathcal{K}_{\sim_d}$-exchangeable.
\end{lemma}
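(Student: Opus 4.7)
The plan is to lift any embedding $\pi:\mathfrak{S}\to\mathfrak{T}$ in $\mathcal{K}_{\sim_d}$ to an embedding $\pi^-:\mathfrak{S}^-\to\mathfrak{T}^-$ in $\mathcal{K}$, apply the $\mathcal{K}$-exchangeability of $\mathfrak{X}$, and then check that the doubling operation commutes with the embedding action so that the equality in distribution passes back to $\mathfrak{X}_{\mathrm{dbl}}$.

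First I would define the lift $\pi^-$. Since $C$ is a symbol of $\mathcal{L}_{\sim_d}$ and $V$ is a symbol of $\mathcal{L}$ (and hence of $\mathcal{L}_{\sim_d}$), the embedding $\pi$ respects the partition of $|\mathfrak{S}|$ into $V^{\mathfrak{S}}$, $C^{\mathfrak{S}}$, and $E^{\mathfrak{S}}$. Set
\[\pi^-(v,c)=(\pi(v),\pi(c)),\quad (v,c)\in V^{\mathfrak{S}}\times C^{\mathfrak{S}},\qquad \pi^-(e)=\pi(e),\quad e\in E^{\mathfrak{S}}.\]
Injectivity of $\pi^-$ is inherited from that of $\pi$. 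The key identities to observe are $\pi\circ\pi_V=\pi_V\circ\pi^-$, $\pi\circ\pi_C=\pi_C\circ\pi^-$, and $(\pi,\pi)\circ\pi_U=\pi_U\circ\pi^-$, each following directly from the definitions in Definition \ref{defn:meaningful}. That $\pi^-$ preserves every relation of $\mathcal{L}$ then reduces to a short case analysis over the cases of Definition \ref{defn:meaningful}: for each possibility ($P$ unary with $\sim_P$ equal to $\sim^*_d$, $P$ unary with $\sim_P$ orthogonal to $\sim_d$, $P$ binary defining some $\sim_r$, or $P$ of arity $\geq 2$ not defining an equivalence relation), the interpretation $P^{\mathfrak{S}^-}$ is defined by pulling back $P^{\mathfrak{S}}$ through the appropriate projection, and the corresponding identity above transfers the embedding property from $\pi$ to $\pi^-$.

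Next, applying the $\mathcal{K}$-exchangeability of $\mathfrak{X}$ to the embedding $\pi^-:\mathfrak{S}^-\to\mathfrak{T}^-$ gives $(\mathfrak{X}(\mathfrak{T}^-))^{\pi^-}\equalinlaw\mathfrak{X}(\mathfrak{S}^-)$. I would then verify the functorial identity
\[\bigl(\mathfrak{Y}^{\pi^-}\bigr)^{\mathrm{dbl}}=\bigl(\mathfrak{Y}^{\mathrm{dbl}}\bigr)^{\pi}\]
for any $\mathcal{L}'$-structure $\mathfrak{Y}$ on $|\mathfrak{T}^-|$: unfolding both sides through the definitions of doubling and of $(\cdot)^{\pi}$, each translates, via $\pi_U\circ\pi^-=(\pi,\pi)\circ\pi_U$, to the condition that $(\pi_U^{-1}(\pi(x_1),\pi(x_2)),\ldots,\pi_U^{-1}(\pi(x_{2m-1}),\pi(x_{2m})))\in P^{\mathfrak{Y}}$.

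Combining these observations with the previous lemma identifying $\mathfrak{X}_{\mathrm{dbl}}(\mathfrak{S})$ with $(\mathfrak{X}(\mathfrak{S}^-))^{\mathrm{dbl}}$ yields
\[(\mathfrak{X}_{\mathrm{dbl}}(\mathfrak{T}))^{\pi}=\bigl((\mathfrak{X}(\mathfrak{T}^-))^{\mathrm{dbl}}\bigr)^{\pi}=\bigl((\mathfrak{X}(\mathfrak{T}^-))^{\pi^-}\bigr)^{\mathrm{dbl}}\equalinlaw (\mathfrak{X}(\mathfrak{S}^-))^{\mathrm{dbl}}=\mathfrak{X}_{\mathrm{dbl}}(\mathfrak{S}),\]
which is the desired $\mathcal{K}_{\sim_d}$-exchangeability. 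The only genuine obstacle is the bookkeeping in showing $\pi^-$ is an embedding: one must invoke each clause of Definition \ref{defn:meaningful} separately, but no new idea is needed beyond the naturality of $\pi_V,\pi_C,\pi_U$ with respect to $\pi$ and $\pi^-$.
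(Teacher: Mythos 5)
The paper states this lemma without proof, and your argument is exactly the routine verification the authors leave implicit: lift the embedding $\pi$ to $\pi^-$ via the naturality of $\pi_V,\pi_C,\pi_U$, apply $\mathcal{K}$-exchangeability of $\mathfrak{X}$, and push the equality in distribution through the deterministic doubling operation. Your proposal is correct and matches the intended approach.
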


\section{Representations over Structures with Equivalence Relations}\label{sec:representations}

Recall the definitions and notation set forth in Sections \ref{sec:setting} and \ref{sec:sequences}.
\subsection{Lifting Over Equivalence Relations with Finitely Many Classes}

We show that, given a suitable representation of the desired kind over $\mathcal{K}_{\sim_d}$, we can also produce a representation over $\mathcal{K}$.

\begin{theorem}\label{thm:lift_finite}
  Let $\mathcal{K}$ be a Fra\"iss\'e class in the signature $\mathcal{L}$ with $2$-DAP and with $\odap$-DAP up to a strongly orderly sequence $\sim_1,\ldots,\sim_d$ where $\#(\sim_d)$ is finite.  Suppose that $\mathfrak{X}$ is a $\mathcal{K}$-exchangeable $\mathcal{L}'$-structure and that there are eq-symmetric functions $\{f_n\}$ so that $\mathfrak{X}^{\{f_n\}}=_{\mathcal{D}}\mathfrak{X}_{\sim_d}$.
Then there are eq-symmetric functions $\{f'_n\}$ so that $\mathfrak{X}^{\{f'_n\}}=_{\mathcal{D}}\mathfrak{X}$.
\end{theorem}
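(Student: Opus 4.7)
The plan is to obtain $\{f'_n\}$ from $\{f_n\}$ by precomposing with the construction $\mathfrak{S}\mapsto\mathfrak{S}^{\eta_d}$, using the newly available uniform random injective labeling $\eta_d\colon U_{d,\mathfrak{S}}\rightarrow[v]$ to interpret the fresh predicates $P_{\sim_d,1},\ldots,P_{\sim_d,v}$ of $\mathcal{L}_{\sim_d,v}$. Concretely, I would set
\begin{align*}
&f'_n\bigl(\mathfrak{S}|_s,\,(\xi_\tau)_{\tau\in B(s)},\,(\prec_t)_{t\subseteq s},\,(\eta_r\upharpoonright\mathfrak{S}|_s)_{r\leq d,\,\#(\sim_r)\neq\infty}\bigr) \\
&\quad := f_n\bigl((\mathfrak{S}|_s)^{\eta_d\upharpoonright\mathfrak{S}|_s},\,(\xi_\tau)_{\tau\in B(s)},\,(\prec_t)_{t\subseteq s},\,(\eta_r\upharpoonright\mathfrak{S}|_s)_{r<d,\,\#(\sim_r)\neq\infty}\bigr).
\end{align*}
Because $\#(\sim_d)$ is finite, $\sim_d$ does not contribute to any $E(y)$, so the index sets $B(s)$ and $\{t\subseteq s\}$ are the same for the $\mathcal{K}$- and $\mathcal{K}_{\sim_d}$-representations, and the i.i.d.\ inputs $\xi_\tau$ and $\prec_t$ can simply be reused.

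For the distributional equality $\mathfrak{X}^{\{f'_n\}}\equalinlaw\mathfrak{X}$: conditioning on a realization of $\eta_d$, the conditional distribution of $\mathfrak{X}^{\{f'_n\}}(\mathfrak{S})$ is precisely that of $\mathfrak{X}^{\{f_n\}}(\mathfrak{S}^{\eta_d})$, which by hypothesis equals $\mathfrak{X}_{\sim_d}(\mathfrak{S}^{\eta_d})=\mathfrak{X}((\mathfrak{S}^{\eta_d})^-)=\mathfrak{X}(\mathfrak{S})$ in distribution, using the identity $(\mathfrak{S}^{\eta_d})^-=\mathfrak{S}$. Since the conditional distribution does not depend on the realization of $\eta_d$, the marginal agrees with $\mathfrak{X}(\mathfrak{S})$ as well.

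For eq-symmetry of $\{f'_n\}$, I need the distribution of $\mathfrak{X}^{\{f'_n\}}(\mathfrak{S})$ to be independent of every $\eta_r$ with $\#(\sim_r)<\infty$, now including $r=d$. For $r<d$, independence is inherited directly from the eq-symmetry of $\{f_n\}$. For $r=d$, Lemma \ref{thm:lift_fin} yields $\mathfrak{X}_{\sim_d}(\mathfrak{S}^{\eta_d})\equalinlaw\mathfrak{X}_{\sim_d}(\mathfrak{S}^{\eta'_d})$ for any two partition labelings $\eta_d,\eta'_d$, since any two such labelings differ by a composition of permutations within individual $\sim^*_d$-equivalence classes (invoking that $\sim^*_d$ freely contains $\sim_d$, so each permutation moves at most one class at a time to an allowable target). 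The main subtlety I anticipate is purely bookkeeping: one must verify that the uniform injective distribution on $\eta_d$ envisioned in \eqref{eq:bits} is exactly the mixing measure under which Lemma \ref{thm:lift_fin} should be integrated, and that the construction $\mathfrak{S}\mapsto\mathfrak{S}^{\eta_d}$ commutes with restriction to finite $s\subseteq|\mathfrak{S}|$ (so that $(\mathfrak{S}|_s)^{\eta_d\upharpoonright\mathfrak{S}|_s}=\mathfrak{S}^{\eta_d}|_s$), ensuring the local formula for $f'_n$ produces a well-defined eq-symmetric representation. Given these verifications, the theorem follows from the two observations above.
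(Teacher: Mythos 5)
Your proposal is correct and follows essentially the same route as the paper: define $f'_n$ by precomposing $f_n$ with $\mathfrak{S}\mapsto\mathfrak{S}^{\eta_d}$ and then observe that, conditionally on $\eta_d$, the distribution of $\mathfrak{X}^{\{f'_n\}}(\mathfrak{S})$ equals that of $\mathfrak{X}_{\sim_d}(\mathfrak{S}^{\eta_d})=\mathfrak{X}(\mathfrak{S})$, which gives both the distributional identity and eq-symmetry at once. Your extra appeal to Lemma \ref{thm:lift_fin} for eq-symmetry in $\eta_d$ is harmless but redundant, since the conditional distribution you computed is already manifestly independent of $\eta_d$.
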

\begin{proof}
We are given functions $f_n$ so that, choosing i.i.d.\ Uniform$[0,1]$ random variables $(\xi_s)$, independent uniform random orderings $(\prec_s)$, and random labelings $(\eta_r)_{r<d}$, we set
\[\tp_{\mathfrak{X}^{\{f_n\}}}(s)=f_{|s|}(\mathfrak{S}|_s,(\xi_\tau)_{\tau\in B(s)},(\prec_\tau)_{\tau\in B(s)},(\eta_r\upharpoonright s)).\]

  We define a function $f'$ from $f$ by setting
\[f'(\mathfrak{S},(\xi_\tau)_{\tau\in B(s)},(\prec_\tau)_{\tau\in B(s)},(\eta_r)_{r\leq d})\]
equal to
\[f(\mathfrak{S}^{\eta_d},(\xi_\tau)_{\tau\in B(s)},(\prec_\tau)_{\tau\in B(s)},(\eta_r)_{r< d}).\]

Observe that for any fixed $\eta_d$, we have
\begin{align*}
\mathbb{P}(\mathfrak{X}^{\{f'_n\}}(\mathfrak{S})=\mathfrak{T}\mid \eta_d)
&=\mathbb{P}(\mathfrak{X}^{\{f_n\}}(\mathfrak{S}^{\eta_d})=\mathfrak{T})\\
&=\mathbb{P}(\mathfrak{X}_{\sim_d}(\mathfrak{S}^{\eta_d})=\mathfrak{T})\\
&=\mathbb{P}(\mathfrak{X}(\mathfrak{S})=\mathfrak{T}).
\end{align*}

This shows both eq-symmetry and that $\mathfrak{X}^{\{f'_n\}}=_{\mathcal{D}}\mathfrak{X}$.
\end{proof}

\subsection{Lifting Over Equivalence Relations with Infinitely Many Classes}

\begin{theorem}\label{thm:lift_infinite}
  Let $\mathcal{K}$ be a Fra\"iss\'e class in the signature $\mathcal{L}$ with $2$-DAP and with $\odap$-DAP up to a strongly orderly sequence $\sim_1,\ldots,\sim_d$, where $\#(\sim_d)$ is infinite.   Suppose that $\mathfrak{X}$ is a $\mathcal{K}$-exchangeable $\mathcal{L}'$-structure and that there are eq-symmetric functions $\{f_n\}$ so that $\mathfrak{X}^{\{f_n\}}=_{\mathcal{D}}\mathfrak{X}_{\mathrm{dbl}}$.
Then there are eq-symmetric functions $\{f'_n\}$ so that $\mathfrak{X}^{\{f'_n\}}=_{\mathcal{D}}\mathfrak{X}$.
\end{theorem}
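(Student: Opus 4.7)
The plan parallels Theorem \ref{thm:lift_finite}, except that because $\#(\sim_d)=\infty$ we cannot package the classes of $\sim_d$ into a finite labeling $\eta_d$; instead, each class must be realised as a fresh element of an auxiliary $\mathcal{L}_{\sim_d}$-structure, contributed by the $C$-part of a structure in $\mathcal{K}_{\sim_d}$.

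First, given $\mathfrak{T}\in\mathcal{K}$ and $s\subseteq|\mathfrak{T}|$, I build $\widehat{\mathfrak{S}}=\widehat{\mathfrak{S}}(\mathfrak{T}|_s)\in\mathcal{K}_{\sim_d}$ with universe $\widehat{s}=s\cup\{c_E:E\text{ is a }\sim_d\text{-class meeting }s\cap V^{\mathfrak{T}}\}$, the $c_E$ fresh symbols canonically indexed by $E$ itself. I set $V^{\widehat{\mathfrak{S}}}=s\cap V^{\mathfrak{T}}$, $E^{\widehat{\mathfrak{S}}}=s\setminus V^{\mathfrak{T}}$, $C^{\widehat{\mathfrak{S}}}=\{c_E\}$, and populate the relations case by case (following Definition \ref{defn:meaningful}) so that the map $\pi:\mathfrak{T}|_s\to\widehat{\mathfrak{S}}^-$ sending $y\mapsto(y,c_{[y]_{\sim_d}})$ on $V$ and $e\mapsto e$ on $E$ is a substructure embedding. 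Because $\sim_d$ is defined by symbols and every relevant predicate is either $\sim_d^*$-invariant or orthogonal to $\sim_d$ (by the dichotomy established above Definition \ref{defn:meaningful}), this construction depends only on $\mathfrak{T}|_s$; membership $\widehat{\mathfrak{S}}\in\mathcal{K}_{\sim_d}$ follows from realising $\widehat{\mathfrak{S}}$ as a substructure of the $\mathfrak{T}_{\sim_d}$ produced by Lemma \ref{thm:inf_embedding}.

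Next, to supply the random inputs required by $f_{|\widehat{s}|}$, I reindex the data on $B(s)$ to data on $B(\widehat{s})$. The identification $(c_{[y]_{\sim_d}},=)\in E(\widehat{s})\leftrightarrow(y,\sim_d)\in E(s)$ induces a natural injection $B(s)\hookrightarrow B(\widehat{s})$ whose image consists of those antichains in $E(\widehat{s})$ that do not mix an element $y\in V^{\widehat{\mathfrak{S}}}$ with its $C$-companion $c_{[y]_{\sim_d}}$; for those I simply reuse $\xi_\sigma,\prec_\sigma$. For a ``mixed'' $\tau$, I take $\xi'_\tau,\prec'_\tau$ to be a measurable dyadic slice of $\xi_\sigma,\prec_\sigma$, where $\sigma\in B(s)$ is the antichain of maximal elements obtained by collapsing $\{(y,=),(c_{[y]_{\sim_d}},=)\}$ to $\{(y,\sim_d)\}$. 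I fix this reindexing once and for all on the ambient $\mathbb{N}$ so that it restricts compatibly to every $s$, and the outputs remain jointly i.i.d.\ uniform. The labelings $\eta_r$ for $r<d$ pass through unchanged, since $\sim_r$ shares its classes between $\mathcal{K}$ and $\mathcal{K}_{\sim_d}$.

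With these pieces, I set
\[
f'_{|s|}(\mathfrak{T}|_s,(\xi_\tau),(\prec_t),(\eta_r\upharpoonright))=\{R x_1\cdots x_m:(\pi_U(\pi(x_1)),\ldots,\pi_U(\pi(x_m)))\in R^{f_{|\widehat{s}|}(\widehat{\mathfrak{S}},(\xi'_\tau),(\prec'_t),(\eta_r\upharpoonright))}\},
\]
which inherits eq-symmetry from $\{f_n\}$ because no new $\eta$-variable is introduced. For the distributional identity, apply $\mathfrak{X}^{\{f_n\}}=_{\mathcal{D}}\mathfrak{X}_{\mathrm{dbl}}$ to $\widehat{\mathfrak{S}}$, use $\mathfrak{X}_{\mathrm{dbl}}(\widehat{\mathfrak{S}})=\mathfrak{X}(\widehat{\mathfrak{S}}^-)^{\mathrm{dbl}}$, and restrict $\mathfrak{X}(\widehat{\mathfrak{S}}^-)$ to the substructure $\mathfrak{T}|_s\subseteq\widehat{\mathfrak{S}}^-$ by $\mathcal{K}$-exchangeability of $\mathfrak{X}$. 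The pullback via $\pi_U\circ\pi$ is exactly the inverse of the doubling operation, so $\mathfrak{X}^{\{f'_n\}}(\mathfrak{T})=_{\mathcal{D}}\mathfrak{X}(\mathfrak{T})$.

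The main obstacle is the reindexing in the second paragraph: it must be defined globally on $\mathbb{N}$ to restrict consistently to every $s$, and must supply genuinely independent uniforms for every $\tau\in B(\widehat{s})$—including those ``mixed'' antichains that have no antichain counterpart in $E(s)$. The dyadic splitting handles this once a canonical enumeration of the fibres of the collapse $B(\widehat{\mathbb{N}})\to B(\mathbb{N})$ is fixed, but the combinatorics of $B(\widehat{s})$ (which depend on how many distinct $\sim_d$-classes meet $s$ and on the surviving $\sim_r$, $r<d$) are where the bookkeeping must be done carefully.
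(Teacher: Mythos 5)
Your proposal is correct and follows essentially the same route as the paper's proof: form the companion tuple $s_{\sim_d}=s\cup U_{d,\mathfrak{S}|_s}$ inside the structure from Lemma \ref{thm:inf_embedding}, collapse $B(s_{\sim_d})$ onto $B(s)$ and regenerate each fibre's uniforms and orderings by a fixed measure-preserving splitting of $\xi_\tau$, then pull back through $\pi_U\circ\pi$ and the chain $\mathfrak{X}^{\{f_n\}}=_{\mathcal{D}}\mathfrak{X}_{\mathrm{dbl}}$, $\mathfrak{X}_{\mathrm{dbl}}(\mathfrak{S})=(\mathfrak{X}(\mathfrak{S}^-))^{\mathrm{dbl}}$, and $\mathcal{K}$-exchangeability. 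The only cosmetic difference is that the paper collapses a mixed antichain to its \emph{most refined} (minimal) elements rather than to the maximal ones; either convention works, since all that matters is a globally consistent partition of $B(s_{\sim_d})$ into fibres over $B(s)$.
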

\begin{proof}

Recall the model $\mathfrak{S}_{\sim_d}$ defined above.  To a tuple $s\subseteq|\mathfrak{S}|$, we have a corresponding tuple $s_{\sim_d}\subseteq|\mathfrak{S}_{\sim_d}|$, where $s_{\sim_d}=s\cup U_{d,\mathfrak{S}|_s}$.  There is a natural bijection between $\bigcup_{y\in s}E(y)$ and $\bigcup_{y\in s_{\sim_d}}E(y)$: if $y\in s\setminus V^{\mathfrak{S}}$ then we map any $[y]_\sim$ to itself; if $y\in s\cap V^{\mathfrak{S}}$ and $[y]_\sim\in E(y)$ then either $\sim$ is $=$ (and we map $[y]_\sim$ to itself), $\sim$ is some $\sim_i$ so that $\sim_i\cap\sim_d^*$ is orthogonal to $\sim_d^*$ (and we map $[y]_\sim$ to itself), or $\sim$ is some $\sim_i$ so that $\sim_i\cap \sim_d^*$ is $\sim_d^*$ (and we map $[y]_\sim$ to $[c]_\sim$ where $c$ is the eqvalence class of $y$ in $\sim_d$.
Note that this case includes mapping $[y]_{\sim_d}$ to $[c]_=$).  This is clearly surjective as every member of $E(y')$ has one of these forms for some $y'\in s_{\sim_d}$.

Consider an antichain $\tau\in B(s_{\sim_d})$ and note that it need not correspond to an antichain from the collection of blurs $B(s)$.  For instance, when $c$ is the equivalence class of $y$ in $\sim_d$, $\{[y]_=,[c]_=\}\in B(\{y,c\})$ corresponds to $\{[y]_=,[y]_{\sim_d}\}\subseteq E(y)$, which is not an antichain.  We fix this by associating each antichain $\tau\in B(s_{\sim_d})$ with the most refined available option in $B(s)$: given $\tau\in B(s_{\sim_d})$, we define $\hat\tau\in B(s)$ to consist of those $c\in\tau$ such that there is no $c'\in\tau$ with $c'\subseteq c$.

Given $\tau\in B(s)$, we put $T^\tau_{\sim_d}=\{\tau'\in B(s_{\sim_d})\mid \hat\tau'=\tau\}$.  From the random variable $\xi_\tau$ we need to construct $\xi_{\tau'}$ for each $\tau'\in T^\tau_{\sim_d}$ and, for each $\tau'\in T^\tau_{\sim_d}$ other than $\tau$, we need a random ordering $\prec_{\tau'}$.  For each $\tau'\in T^\tau_{\sim_d}\setminus\{\tau\}$, we write $O_{\tau'}$ for the set of functions from random orderings of $\tau$ to random orderings of $\tau'$, and we assign this set the uniform measure.  Fix a measure-preserving function $\pi_\tau:[0,1]^{|T^\tau_{\sim_d}|}\times\prod_{\tau'\in T^\tau_{\sim_d}\setminus\{\tau\}}O_{\tau}\rightarrow[0,1]$.  Fix bijections $\iota_0:T^\tau_{\sim_d}\rightarrow[0,T^r_{\sim_d}-1]$ and $\iota_1:(T^\tau_{\sim_d}\setminus\{\tau\})\rightarrow[T^r_{\sim_d},2T^r_{\sim_d}-1]$.

We now define $f'_{|s|}(\mathfrak{S}|_s,(\xi_\tau)_{\tau\in B(s)},(\prec_\tau)_{\tau\in B(s)},(\eta_r\upharpoonright s))$.  Given $\mathfrak{S}|_s$, we have the embedding $\pi:\mathfrak{S}|_s\rightarrow\mathfrak{S}^-_{\sim_d}$ given by Lemma \ref{thm:inf_embedding}.  For each $\tau'\in B(s_{\sim_d})$ corresponding to $\tau\in B(s)$, let $\xi'_{\tau'}$ be $(\pi^{-1}_{\tau}(\xi_\tau))_{\iota_0(\tau')}$.  Let \[\prec'_{\tau'}=\left\{\begin{array}{cc}\prec_\tau,&\text{ if }\tau'=\tau,\\ (\pi^{-1}_\tau(\xi_\tau))_{\iota_1(\tau')}(\prec_\tau),& \text{otherwise}.\end{array}\right.\]

We have 
\[\tp_{\mathfrak{X}^{\{f_n\}}}(s_{\sim_d})=f_{|s_{\sim_d}|}(\mathfrak{S}_{\sim_d}|_{s_{\sim_d}},(\xi_\tau)_{\tau\in B(s_{\sim_d})},(\prec_\tau)_{\tau\in B(s_{\sim_d})},(\eta_r)).\]
  For each element $y\in s$, assign a pair 
  \[y'=\left\{\begin{array}{cc}
  (y,[y]_{\sim_d}),& \text{if }y\in V^{\mathfrak{S}},\\ (y,y), & \text{if }y\in E^{\mathfrak{S}},\end{array}\right.\]
   and set $(y_1,\ldots,y_k)\in P$ if and only if $(y'_1,\ldots,y'_k)\in P$.

Then
\begin{align*}
  \mathbb{P}(\mathfrak{X}^{\{f'_n\}}(\mathfrak{S})=\mathfrak{T})
&=\mathbb{P}((\mathfrak{X}^{\{f_n\}}(\mathfrak{S}_{\sim_d}))^-=\mathfrak{T})\\
&=\mathbb{P}((\mathfrak{X}_{\mathrm{dbl}}(\mathfrak{S}_{\sim_d}))^-=\mathfrak{T})\\
&=\mathbb{P}(\mathfrak{X}(\mathfrak{S}^-_{\sim_d})=\mathfrak{T})\\
&=\mathbb{P}(\mathfrak{X}(\mathfrak{S})=\mathfrak{T}).
\end{align*}
\end{proof}

\subsection{Main Theorem}

The proof of the main theorem itself is now straightforward.

\begin{theorem}
Let $\mathcal{K}$ be a Fra\"iss\'e class with $2$-DAP and with $\odap$-DAP up to a strongly orderly sequence of definable equivalence relations $\sim_1,\ldots,\sim_d$.  Let $\mathfrak{X}$ be $\mathcal{K}$-exchangeable.
Then there are eq-symmetric functions $\{f_n\}$ such that $\mathfrak{X}^{\{f_n\}}=_{\mathcal{D}}\mathfrak{X}$.
\end{theorem}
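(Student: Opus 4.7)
The plan is to prove this by induction on $d$, the length of the strongly orderly sequence, using the two lifting theorems from Section \ref{sec:representations} to peel off one equivalence relation at a time.

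For the base case $d=0$, the hypothesis that $\mathcal{K}$ has $\odap$-DAP up to the empty sequence reduces to plain $\odap$-DAP, and the eq-symmetry condition becomes vacuous because there are no $\eta_r$ variables. In this case the representation is the Aldous--Hoover-type theorem for Fra\"{\i}ss\'e classes with $\odap$-DAP proven independently in \cite[Theorem 3.2]{CraneTowsner2015} and \cite{Ackerman2015}, which I would invoke directly.

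For the inductive step, assume the theorem holds for every strongly orderly sequence of length less than $d$, and let $\mathcal{K}$, $\sim_1,\ldots,\sim_d$, and $\mathfrak{X}$ be as in the hypothesis. I would split into two cases according to whether $\#(\sim_d)$ is finite or infinite, treating them in parallel. In the finite case, form the Fra\"{\i}ss\'e class $\mathcal{K}_{\sim_d}$ and the induced random structure $\mathfrak{X}_{\sim_d}$ from Section \ref{sec:eliminating}. The lemmas there establish that $\mathcal{K}_{\sim_d}$ is a Fra\"{\i}ss\'e class with $2$-DAP and with $\odap$-DAP up to the strongly orderly sequence $\sim_1,\ldots,\sim_{d-1}$, which is one shorter than the original. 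Moreover $\mathfrak{X}_{\sim_d}$ is $\mathcal{K}_{\sim_d}$-exchangeable, since any embedding $\pi:\mathfrak{S}\to\mathfrak{T}$ in $\mathcal{K}_{\sim_d}$ induces an embedding $\mathfrak{S}^-\to\mathfrak{T}^-$ in $\mathcal{K}$, giving $(\mathfrak{X}_{\sim_d}(\mathfrak{T}))^\pi = (\mathfrak{X}(\mathfrak{T}^-))^\pi \equalinlaw \mathfrak{X}(\mathfrak{S}^-) = \mathfrak{X}_{\sim_d}(\mathfrak{S})$. The inductive hypothesis then produces eq-symmetric functions $\{f_n\}$ with $\mathfrak{X}^{\{f_n\}} \equalinlaw \mathfrak{X}_{\sim_d}$, and Theorem \ref{thm:lift_finite} converts these into eq-symmetric functions $\{f'_n\}$ satisfying $\mathfrak{X}^{\{f'_n\}} \equalinlaw \mathfrak{X}$.

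In the infinite case the argument is structurally identical, using the alternate construction of $\mathcal{K}_{\sim_d}$ and $\mathfrak{X}_{\mathrm{dbl}}$ from Section \ref{sec:eliminating}: Lemma \ref{thm:lift_infin} ensures $\mathfrak{X}_{\mathrm{dbl}}$ is $\mathcal{K}_{\sim_d}$-exchangeable, the remaining lemmas verify that $\mathcal{K}_{\sim_d}$ satisfies the inductive hypotheses with sequence $\sim_1,\ldots,\sim_{d-1}$, and Theorem \ref{thm:lift_infinite} converts the inductively obtained representation of $\mathfrak{X}_{\mathrm{dbl}}$ into one for $\mathfrak{X}$.

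The main obstacle has, in effect, already been dispatched in Sections \ref{sec:eliminating} and \ref{sec:representations}: the difficult work lies in defining $\mathcal{K}_{\sim_d}$ correctly in each regime, showing that it inherits $2$-DAP and $\odap$-DAP up to the shortened sequence, and constructing the lifts that convert a representation of $\mathfrak{X}_{\sim_d}$ (or $\mathfrak{X}_{\mathrm{dbl}}$) into one of $\mathfrak{X}$ while preserving eq-symmetry. Once those technical ingredients are in place, the induction itself is purely formal, and the proof is just a matter of stringing together the inductive hypothesis with the appropriate lifting theorem in each of the two cases.
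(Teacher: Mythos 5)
Your proposal is correct and matches the paper's own argument: the paper likewise strips off the equivalence relations one at a time via the constructions of Section \ref{sec:eliminating}, invokes Theorem 3.2 of \cite{CraneTowsner2015} once the class has full $\odap$-DAP, and then pulls the representation back through Theorems \ref{thm:lift_finite} and \ref{thm:lift_infinite}. Your phrasing as an explicit induction on $d$ (interleaving one elimination with one lift per step) versus the paper's two passes of ``repeated application'' is purely organizational, not a different proof.
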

\begin{proof}
We obtain $\mathfrak{X}'$ which is $\mathcal{K}_{\sim_d,\ldots,\sim_1}$-exchangeable by repeated applications of Lemmas \ref{thm:lift_fin} and \ref{thm:lift_infin}.  Since $\mathcal{K}_{\sim_d,\ldots,\sim_1}$ has $\odap$-DAP, by Theorem 3.2 from \cite{CraneTowsner2015}, $\mathfrak{X}_{\sim_d,\ldots,\sim_1}=_{\mathcal{D}}\mathfrak{X}^{\{f_n\}}$ for some Borel functions $\{f_n\}$.

We then repeatedly apply Theorems \ref{thm:lift_finite} and \ref{thm:lift_infinite} to obtain eq-symmetric functions $\{f'_n\}$ so that $\mathfrak{X}=_{\mathcal{D}}\mathfrak{X}^{\{f'_n\}}$.
\end{proof}

\bibliography{refs}
\bibliographystyle{abbrv}

\end{document}